\numberwithin{equation}{section}
\newtheorem{Theorem}{Theorem}[section]
\newtheorem{Corollary}[Theorem]{Corollary}
\newtheorem{Lemma}[Theorem]{Lemma}
\newtheorem{Proposition}[Theorem]{Proposition}
 { \theoremstyle{definition}
\newtheorem{Definition}[Theorem]{Definition}}
\begin{document}
\allowdisplaybreaks

\newcommand{\arXivNumber}{1906.11745}

\renewcommand{\PaperNumber}{075}

\FirstPageHeading

\ShortArticleName{The Racah Algebra as a Subalgebra of the Bannai--Ito Algebra}

\ArticleName{The Racah Algebra as a Subalgebra\\ of the Bannai--Ito Algebra}

\Author{Hau-Wen HUANG}

\AuthorNameForHeading{H.-W.~Huang}

\Address{Department of Mathematics, National Central University, Chung-Li 32001, Taiwan}
\Email{\href{mailto:hauwenh@math.ncu.edu.tw}{hauwenh@math.ncu.edu.tw}}

\ArticleDates{Received May 22, 2020, in final form July 31, 2020; Published online August 10, 2020}

\Abstract{Assume that ${\mathbb F}$ is a field with $\operatorname{char}{\mathbb F}\not=2$. The Racah algebra $\Re$ is a unital associative ${\mathbb F}$-algebra defined by generators and relations. The generators are $A$, $B$, $C$, $D$ and the relations assert that $[A,B]=[B,C]=[C,A]=2D$ and each of $[A,D]+AC-BA$, $[B,D]+BA-CB$, $[C,D]+CB-AC$ is central in $\Re$. The Bannai--Ito algebra $\mathfrak{BI}$ is a~unital associative ${\mathbb F}$-algebra generated by $X$, $Y$, $Z$ and the relations assert that each of $\{X,Y\}-Z$, $\{Y,Z\}-X$, $\{Z,X\}-Y$ is central in $\mathfrak{BI}$. It was discovered that there exists an ${\mathbb F}$-algebra homomorphism $\zeta\colon \Re\to \mathfrak{BI}$ that sends $A \mapsto \frac{(2X-3)(2X+1)}{16}$, $B \mapsto \frac{(2Y-3)(2Y+1)}{16}$, $C \mapsto \frac{(2Z-3)(2Z+1)}{16}$. We show that $\zeta$ is injective and therefore $\Re$ can be considered as an ${\mathbb F}$-subalgebra of $\mathfrak{BI}$. Moreover we show that any Casimir element of $\Re$ can be uniquely expressed as a polynomial in $\{X,Y\}-Z$, $\{Y,Z\}-X$, $\{Z,X\}-Y$ and $X+Y+Z$ with coefficients in ${\mathbb F}$.}

\Keywords{Bannai--Ito algebra; Racah algebra; Casimir elements}

\Classification{81R10; 81R12}

\section{Introduction}

Throughout this paper we adopt the following conventions:
Assume that ${\mathbb F}$ is a field with $\operatorname{char}{\mathbb F}\not=2$. Let ${\mathbb N}$ denote the set of all nonnegative integers. The bracket $[\,,]$ stands for the commutator and the curly bracket $\{\,,\}$ stands for the anticommutator. An algebra is meant to be an associative algebra with unit $1$ and a subalgebra is a subset of the parent algebra which is closed under the operations and has the same unit.

The Racah algebra \cite{zhedanov1988, Levy1965} and the Bannai--Ito algebra \cite{tvz2012} are the ${\mathbb F}$-algebras defined by generators and relations to give the algebraic interpretations of the Racah polynomials and the Bannai--Ito polynomials, respectively. At first, the description of those relations involved several parameters. In recent papers \cite{HR2017,HBI2016,R&BI2015, BI&NW2016} the role of the parameters is replaced by the central elements. The contemporary Racah and Bannai--Ito algebras are defined as follows:
The Racah algebra $\Re$ is an ${\mathbb F}$-algebra generated by $A$, $B$, $C$, $D$ and the relations assert that
\begin{gather*}
[A,B]=[B,C]=[C,A]=2D
\end{gather*}
and each of
\begin{gather*}
\alpha=[A,D]+AC-BA,
\qquad
\beta=[B,D]+BA-CB,
\qquad
\gamma=[C,D]+CB-AC
\end{gather*}
is central in $\Re$. Note that
\begin{gather*}
\delta=A+B+C
\end{gather*}
is also central in $\Re$.
The Bannai--Ito algebra $\mathfrak{BI}$ is an ${\mathbb F}$-algebra generated by $X$, $Y$, $Z$ and the relations assert that each of
\begin{gather*}
\kappa=\{X,Y\}-Z,
\qquad
\lambda=\{Y,Z\}-X,
\qquad
\mu=\{Z,X\}-Y
\end{gather*}
is central in $\mathfrak{BI}$. The applications to the Racah problems for $\mathfrak{su}(2)$, $\mathfrak{su}(1,1)$, $\mathfrak{sl}_{-1}(2)$ and the connections to the Laplace--Dunkl and Dirac--Dunkl equations on the $2$-sphere have been explored in \cite{qBI2019,BI2017, BI2016, BI2014-2,BI2014,gvz2013,R&LD2014,gvz2014,BI2015,zhedanov1988,quadratic1991,Levy1965}. For more information and recent progress, see \cite{Vinet2019,qBI2018, BI2015-2,HR2017,HBI2016,BI2018,SH:2017-1}.

A result of \cite{R&BI2015} made the following link between the Racah algebra $\Re$ and the Bannai--Ito algebra $\mathfrak{BI}$. The standard realization for $\mathfrak{BI}$ is a representation $\pi\colon \mathfrak{BI}\to {\rm End}({\mathbb F}[x])$
given in \cite[Section 4]{tvz2012}.
Inspired by $\pi$, a representation $\tau\colon \Re\to {\rm End}({\mathbb F}[x])$ was constructed in \cite[Section~2]{R&BI2015} as well as an ${\mathbb F}$-algebra homomorphism $\zeta\colon \Re\to \mathfrak{BI}$ that sends
\begin{gather*}
A \mapsto \frac{(2X-3)(2X+1)}{16},
\qquad
B \mapsto \frac{(2Y-3)(2Y+1)}{16},
\qquad
C \mapsto \frac{(2Z-3)(2Z+1)}{16}.
\end{gather*}
Briefly $\tau$ is the composition of $\zeta$ followed by $\pi$. The main result of this paper is to prove that $\zeta$ is injective. To see this we derive the following results.
We show that the monomials
\begin{gather}\label{e:basisR}
A^i B^j C^k D^\ell \alpha^r \beta^s
\qquad
\text{for all $i,j,k,\ell,r,s\in {\mathbb N}$}
\end{gather}
are an ${\mathbb F}$-basis for $\Re$ and the monomials
\begin{gather}\label{eq:basisBI}
X^i Y^j Z^k \kappa^r \lambda^s \mu^t
\qquad \text{for all $i,j,k,r,s,t\in {\mathbb N}$}
\end{gather}
are an ${\mathbb F}$-basis for $\mathfrak{BI}$. We consider the following ${\mathbb F}$-subspaces of $\mathfrak{BI}$ induced from the basis~(\ref{eq:basisBI}) for $\mathfrak{BI}$: Let $w_X,w_Y,w_Z,w_\kappa,w_\lambda,w_\mu\in{\mathbb N}$ be given. For each $n\in {\mathbb N}$ let $\mathfrak{BI}_n$ denote the ${\mathbb F}$-subspace of $\mathfrak{BI}$ spanned by $X^i Y^j Z^k \kappa^r \lambda^s \mu^t$ for all $i,j,k,r,s,t\in {\mathbb N}$ with
\begin{gather*}
w_X i+w_Y j+w_Z k+w_\kappa r+w_\lambda s+w_\mu t\leq n.
\end{gather*}
We show that the sequence $\{\mathfrak{BI}_n\}_{n\in {\mathbb N}}$ is an ${\mathbb N}$-filtration of $\mathfrak{BI}$ if and only if
\begin{gather*}
\max\{w_Z, w_\kappa\}\leq w_X+w_Y,
\qquad
\max\{w_X, w_\lambda\}\leq w_Y+w_Z,
\qquad
\max\{w_Y, w_\mu\} \leq w_Z+w_X.
\end{gather*}
We apply the basis (\ref{e:basisR}) for $\Re$ and the ${\mathbb N}$-filtration $\{\mathfrak{BI}_n\}_{n\in {\mathbb N}}$ of $\mathfrak{BI}$ associated with
\begin{gather*}
(w_X,w_Y,w_Z,w_\kappa ,w_\lambda,w_\mu)=(4,4,6,8,9,9)
\end{gather*}
to conclude the injectivity of $\zeta$.

We regard the Racah algebra $\Re$ as an ${\mathbb F}$-subalgebra of $\mathfrak{BI}$ via $\zeta$.
Let $\mathfrak{C}$ denote the commutative ${\mathbb F}$-subalgebra of $\Re$ generated by $\alpha$, $\beta$, $\gamma$, $\delta$. Extending the setting \cite[Section 2]{gvz2014}, each element of
\begin{gather*}
D^2+A^2+B^2
+\frac{(\delta+2)\{A,B\}-\big\{A^2,B\big\}-\big\{A,B^2\big\}}{2}
+A (\beta-\delta)
+B (\delta-\alpha)+\mathfrak{C}
\end{gather*}
is called a Casimir element of $\Re$ \cite{SH:2017-1}. Each Casimir element of $\Re$ is central in $\Re$. We locate the expressions for the $D_6$-symmetric Casimir elements \cite[Section~5]{SH:2017-1}
of $\Re$ in terms of
\begin{gather*}
\iota=X+Y+Z
\end{gather*}
and $\kappa$, $\lambda$, $\mu$. Note that $\iota$, $\kappa$, $\lambda$, $\mu$ are in the centralizer of $\Re$ in $\mathfrak{BI}$.
Furthermore we apply the ${\mathbb N}$-filtration $\{\mathfrak{BI}_n\}_{n\in {\mathbb N}}$ of $\mathfrak{BI}$ associated with
\begin{gather*}
(w_X,w_Y,w_Z,w_\kappa ,w_\lambda,w_\mu)=(1,1,2,0,0,0)
\end{gather*}
to prove that for any Casimir element $\Omega$ of $\Re$ there exists a unique four-variable polynomial $P(x_1,x_2,x_3,x_4)$ over ${\mathbb F}$ such that
\begin{gather*}
\Omega=P(\iota,\kappa,\lambda,\mu).
\end{gather*}

The outline of this paper is as follows: In Sections~\ref{s:Racah} and~\ref{s:BI} we present the required backgrounds on $\Re$ and $\mathfrak{BI}$, especially the basis (\ref{e:basisR}) for $\Re$ and the criterion for $\{\mathfrak{BI}_n\}_{n\in {\mathbb N}}$ as an ${\mathbb N}$-filtration of $\mathfrak{BI}$.
In Section~\ref{s:zeta} we review the homomorphism $\zeta\colon \Re\to \mathfrak{BI}$ and evaluate the image of $D$ under $\zeta$.
In Section~\ref{s:injective} we give the proof for the injectivity of $\zeta$.
In Section~\ref{s:RCasimir} we show that each Casimir element of $\Re$ can be uniquely expressed as a polynomial in $\iota$, $\kappa$, $\lambda$, $\mu$ over ${\mathbb F}$.

\section[The Racah algebra $\Re$]{The Racah algebra $\boldsymbol{\Re}$}\label{s:Racah}

\begin{Definition}[\cite{HR2017,R&BI2015,zhedanov1988, Levy1965}]\label{defn:R}
The {\it Racah algebra} $\Re$ is an ${\mathbb F}$-algebra defined by generators and relations in the following way. The generators are $A$, $B$, $C$, $D$. The relations assert that
\begin{gather}\label{r:D}
[A,B]=[B,C]=[C,A]=2D
\end{gather}
and each of
\begin{gather*}
[A,D]+AC-BA,
\qquad
[B,D]+BA-CB,
\qquad
[C,D]+CB-AC
\end{gather*}
is central in $\Re$.
\end{Definition}

We define $\alpha$, $\beta$, $\gamma$, $\delta$ as the following elements of $\Re$:
\begin{gather}
\alpha = [A,D]+AC-BA, \label{r:alpha} \\
\beta = [B,D]+BA-CB,\label{r:beta}\\
\gamma = [C,D]+CB-AC,\label{r:gamma}\\
\delta = A+B+C. \label{r:delta}
\end{gather}

\begin{Lemma}[{\cite[Lemma 3.2]{SH:2017-1}}] \label{lem:alp&bet&gam&del}
The following $(i)$--$(iii)$ hold:
\begin{enumerate}\itemsep=0pt
\item[$(i)$] The ${\mathbb F}$-algebra $\Re$ is generated by $A$, $B$, $C$.
\item[$(ii)$] Each of $\alpha$, $\beta$, $\gamma$, $\delta$ is central in $\Re$.
\item[$(iii)$] The sum of $\alpha$, $\beta$, $\gamma$ is equal to zero.
\end{enumerate}
\end{Lemma}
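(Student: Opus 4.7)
The three parts are logically interlocked, so I would carry them out in the order (i), then (ii), then (iii), as each step feeds the next.

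For (i), the key observation is that the first relation (\ref{r:D}) yields $D=\tfrac{1}{2}[A,B]$ (using $\operatorname{char}{\mathbb F}\ne 2$). Hence $D$ already lies in the subalgebra generated by $A$, $B$, $C$, so this subalgebra equals all of $\Re$.

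For (ii), the centrality of $\alpha$, $\beta$, $\gamma$ is immediate from Definition \ref{defn:R}. To show $\delta=A+B+C$ is central, by (i) it suffices to check that $\delta$ commutes with each of $A$, $B$, $C$. The three relations in (\ref{r:D}) give, for instance,
\begin{gather*}
[\delta,A]=[B,A]+[C,A]=-2D+2D=0,
\end{gather*}
and the analogous telescoping computations for $[\delta,B]$ and $[\delta,C]$ also give zero.

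For (iii), I would add (\ref{r:alpha})--(\ref{r:gamma}) term by term. The six ``quadratic'' terms $AC-BA$, $BA-CB$, $CB-AC$ cancel in pairs, while the commutator terms combine as
\begin{gather*}
[A,D]+[B,D]+[C,D]=[A+B+C,D]=[\delta,D],
\end{gather*}
which is $0$ by the centrality of $\delta$ established in (ii). There is no genuine obstacle here; the only thing to be careful about is the order of the three parts, since (ii) for $\delta$ relies on (i), and (iii) relies on (ii).
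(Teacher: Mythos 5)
Your proof is correct and complete: $D=\tfrac12[A,B]$ uses $\operatorname{char}\mathbb F\neq 2$ exactly as needed for (i), the telescoping computations $[\delta,A]=[\delta,B]=[\delta,C]=0$ combined with (i) give (ii), and summing (\ref{r:alpha})--(\ref{r:gamma}) to get $\alpha+\beta+\gamma=[\delta,D]=0$ correctly uses (ii), so the logical ordering (i)$\to$(ii)$\to$(iii) is sound. The paper itself offers no proof of this lemma --- it is quoted from \cite[Lemma~3.2]{SH:2017-1} --- and your argument is the standard elementary verification one would expect to find there.
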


\begin{Proposition}\label{prop:presentation1}
The ${\mathbb F}$-algebra $\Re$ has a presentation with generators $A$, $B$, $C$, $D$, $\alpha$, $\beta$ and relations
\begin{gather}
BA = AB-2D, \label{eq:pres2-1}\\
CB = BC-2D, \label{eq:pres2-2}\\
CA = AC+2D, \label{eq:pres2-3}\\
DA = AD-AB+AC+2D-\alpha, \label{eq:pres2-4}\\
DB = BD-BC+AB-\beta, \label{eq:pres2-5}\\
DC = CD-AC+BC-2D+\alpha+\beta, \label{eq:pres2-6}\\
\alpha A = A\alpha,\qquad \alpha B = B\alpha, \qquad \alpha C = C\alpha, \qquad \alpha D = D\alpha,\label{eq:pres2-7}\\
\beta A = A\beta,\qquad\beta B = B\beta,\qquad\beta C = C\beta,\qquad\beta D = D\beta,\qquad\beta\alpha=\alpha\beta.
\label{eq:pres2-8}
\end{gather}
\end{Proposition}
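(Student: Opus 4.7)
The plan is to construct mutually inverse $\mathbb{F}$-algebra homomorphisms between $\Re$ and the candidate algebra $\widetilde{\Re}$ defined by the stated presentation, and so conclude that the listed relations provide an equivalent presentation of $\Re$ once $\alpha$ and $\beta$ are adjoined as formal generators.

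For the direction $\widetilde{\Re}\to\Re$, I would send $A,B,C,D$ to themselves and $\alpha,\beta$ to the central elements of $\Re$ defined in \eqref{r:alpha}, \eqref{r:beta}. The rewrites \eqref{eq:pres2-1}--\eqref{eq:pres2-3} are just the commutator identities \eqref{r:D}. Relation \eqref{eq:pres2-4} follows by solving the definition of $\alpha$ for $DA$ and using \eqref{eq:pres2-1} to replace $BA$; relation \eqref{eq:pres2-5} is the analogous rearrangement of the definition of $\beta$ together with \eqref{eq:pres2-1} and \eqref{eq:pres2-2}; and relation \eqref{eq:pres2-6} comes from solving the definition of $\gamma$ for $DC$, substituting $\gamma=-\alpha-\beta$ from Lemma 2.2(iii), and using \eqref{eq:pres2-2} and \eqref{eq:pres2-3}. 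The commutativity relations \eqref{eq:pres2-7} and \eqref{eq:pres2-8} are then immediate from the centrality of $\alpha$ and $\beta$ in $\Re$.

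For the converse direction $\Re\to\widetilde{\Re}$, I would send $A,B,C,D$ to the same-named elements of $\widetilde{\Re}$. The commutator relations \eqref{r:D} are immediate from \eqref{eq:pres2-1}--\eqref{eq:pres2-3}. To verify the three defining centralities of $\Re$, I would reverse the rearrangements above: \eqref{eq:pres2-4} combined with \eqref{eq:pres2-1} identifies $[A,D]+AC-BA$ with the generator $\alpha$ of $\widetilde{\Re}$; \eqref{eq:pres2-5} with \eqref{eq:pres2-1}, \eqref{eq:pres2-2} identifies $[B,D]+BA-CB$ with $\beta$; and \eqref{eq:pres2-6} with \eqref{eq:pres2-2}, \eqref{eq:pres2-3} identifies $[C,D]+CB-AC$ with $-\alpha-\beta$. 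Each is then central by \eqref{eq:pres2-7} and \eqref{eq:pres2-8}.

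The two maps are seen to be mutually inverse by checking on generators: both act as the identity on $A,B,C,D$, and the identifications in the preceding paragraph show that $\alpha$ and $\beta$ in $\widetilde{\Re}$ are also fixed under the round trip. I do not anticipate a serious obstacle: the whole argument reduces to substitution, the only non-formal input being the use of $\alpha+\beta+\gamma=0$ to eliminate $\gamma$ in favor of $\alpha,\beta$ when treating \eqref{eq:pres2-6}.
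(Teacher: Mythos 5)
Your proposal is correct and takes essentially the same route as the paper: the relation checks in your map $\widetilde{\Re}\to\Re$ are exactly the substitutions recorded in the paper's proof (the definitions of $\alpha$, $\beta$, $\gamma$, the commutator relations \eqref{r:D}, centrality from Lemma~\ref{lem:alp&bet&gam&del}, and $\gamma=-\alpha-\beta$). The only difference is one of completeness: the paper writes out just that forward verification and leaves the inverse homomorphism and the mutual-inverse check on generators implicit as routine, whereas you spell them out explicitly.
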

\begin{proof}
Relations (\ref{eq:pres2-1})--(\ref{eq:pres2-3}) are immediate from (\ref{r:D}).
Relation (\ref{eq:pres2-4}) follows from~(\ref{r:alpha}),~(\ref{eq:pres2-1}). Relation (\ref{eq:pres2-5}) follows from (\ref{r:beta}), (\ref{eq:pres2-1}) and (\ref{eq:pres2-2}). Relation (\ref{eq:pres2-6}) follows from (\ref{r:gamma}), (\ref{eq:pres2-2}) and Lemma \ref{lem:alp&bet&gam&del}(iii). Relations (\ref{eq:pres2-7}) and~(\ref{eq:pres2-8}) follow from Lemma~\ref{lem:alp&bet&gam&del}(ii).
\end{proof}

\begin{Theorem}\label{thm:basisURA3}The elements
\begin{gather}\label{e:basisURA3}
A^i B^j C^k D^\ell \alpha^r \beta^s
\qquad \text{for all $i,j,k,\ell, r,s\in {\mathbb N}$}
\end{gather}
are an ${\mathbb F}$-basis $\Re$.
\end{Theorem}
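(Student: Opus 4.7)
My plan is to prove Theorem 2.4 via Bergman's Diamond Lemma applied to the presentation in Proposition~\ref{prop:presentation1}. Each of the relations (\ref{eq:pres2-1})--(\ref{eq:pres2-8}) is already oriented as a rewriting rule whose left-hand side is a ``disordered'' two-letter word with respect to the total order $A<B<C<D<\alpha<\beta$, and whose right-hand side is a linear combination of strictly smaller words in the grading \emph{length, then inversion count, then lexicographic}. For example, (\ref{eq:pres2-4}) rewrites $DA$ (length $2$, one inversion) as a sum of length-$2$ inversion-free words $AD,AB,AC$ together with the length-$1$ terms $D$ and $\alpha$. Hence the rewriting system is terminating, and the irreducible monomials are exactly the words with letters in non-decreasing order, which are precisely $A^iB^jC^kD^\ell\alpha^r\beta^s$.

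By the Diamond Lemma it then suffices to verify that every overlap ambiguity of the rewriting system resolves. The overlaps decompose as follows. First, there are the four ``central-shuffle'' families obtained by overlapping a rule from (\ref{eq:pres2-7})--(\ref{eq:pres2-8}) with itself or with one of (\ref{eq:pres2-1})--(\ref{eq:pres2-6}) at the letter $\alpha$, $\beta$, or $D$; these are immediate because $\alpha$ and $\beta$ commute with everything and the two reduction paths differ only in the order in which $\alpha$ or $\beta$ is transported to the right. Second, there are the four ``core'' cubic overlaps $CBA$, $DBA$, $DCB$, $DCA$, each carrying two possible initial reductions inherited from (\ref{eq:pres2-1})--(\ref{eq:pres2-6}).

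The main obstacle is checking these four cubic ambiguities; this is where the defining commutator structure of $\Re$ is genuinely tested. Reducing $CBA$ by $CB\to BC-2D$ versus $BA\to AB-2D$ and normalizing both results must yield the same polynomial in the normal monomials; this encodes the fact that the three commutators $[A,B]$, $[B,C]$, $[C,A]$ are the common element $2D$. The ambiguities $DBA$, $DCB$, $DCA$ are longer: each produces new length-$3$ monomials that must be rewritten again using (\ref{eq:pres2-1})--(\ref{eq:pres2-6}), and coincidence of the two resulting normal forms amounts precisely to the centrality of $\alpha$, $\beta$ from Lemma~\ref{lem:alp&bet&gam&del}(ii) together with the identity $\alpha+\beta+\gamma=0$ from Lemma~\ref{lem:alp&bet&gam&del}(iii). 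Once these four resolutions are verified by direct computation, the Diamond Lemma delivers that the monomials in (\ref{e:basisURA3}) form an $\mathbb{F}$-basis for $\Re$.
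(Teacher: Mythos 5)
Your overall strategy is the paper's: apply Bergman's diamond lemma to the reduction system (\ref{eq:pres2-1})--(\ref{eq:pres2-8}), observe that the irreducible words are the weakly increasing ones $A^iB^jC^kD^\ell\alpha^r\beta^s$, and check the nontrivial overlaps $CBA$, $DBA$, $DCB$, $DCA$. The genuine gap is in your termination step. The diamond lemma needs a partial order on the free monoid that is compatible with multiplication (a semigroup order), satisfies the descending chain condition, and dominates every right-hand side; it is not enough to compare the two sides of each rule in isolation, because reductions are applied inside contexts. Your order ``length, then inversion count, then lexicographic'' is not a semigroup order: the inversion count is not stable under contexts once a rewrite changes the multiset of letters, which is exactly what happens in (\ref{eq:pres2-4})--(\ref{eq:pres2-6}). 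Concretely, reducing the subword $DA$ of $CCDA$ via (\ref{eq:pres2-4}) produces the term $CCAB$; both words have length $4$, but $CCDA$ has $3$ inversions while $CCAB$ has $4$, so this single reduction strictly increases your measure. Hence ``the rewriting system is terminating'' does not follow from your grading, and the order-theoretic hypothesis of the diamond lemma is unverified as stated.

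The flaw is repairable, and the repair is where the paper's proof differs from yours. The paper constructs a bespoke monoid partial order generated by two context-stable elementary moves --- interchanging an out-of-order pair of letters, and replacing $B$, $C$ or $D$ by its left neighbour in the list $A,B,C,D$ --- so that, for instance, $AB\preceq DA$ because $AB$ is obtained from $DA$ by a swap followed by letter decreases; this order has the descending chain condition and is compatible with the system. Alternatively, plain degree-lexicographic order (length first, then lexicographic with $A<B<C<D<\alpha<\beta$) is already a semigroup order with the required properties, since every monomial on the right of (\ref{eq:pres2-1})--(\ref{eq:pres2-8}) either is shorter or begins with a strictly smaller letter; it is precisely your insertion of the inversion count ahead of the lexicographic comparison that destroys compatibility. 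One further caution: resolvability of $CBA$, $DBA$, $DCB$, $DCA$ must be verified by explicit reduction inside the free algebra (the paper records the common reduced forms); saying the coincidence ``amounts to'' the centrality of $\alpha$, $\beta$ and $\alpha+\beta+\gamma=0$ in $\Re$ is only heuristic, since linear independence of the irreducible monomials in $\Re$ is the very thing being proved --- you do acknowledge that a direct computation is needed, so this is a presentational point rather than a gap.
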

\begin{proof}To prove the result we invoke the diamond lemma \cite[Theorem 1.2]{diamond1978}.
The relations (\ref{eq:pres2-1})--(\ref{eq:pres2-8}) are regarded as a reduction system.
The ${\mathbb F}$-linear combinations of (\ref{e:basisURA3}) are exactly the irreducible elements under the reduction system. There are no inclusion ambiguities in the reduction system. The nontrivial overlap ambiguities involve the words $CBA$, $DBA$, $DCB$, $DCA$. In any reduction ways, we eventually obtain that
\begin{gather*}
CBA=ABC+2AB-2BC-2AD+2BD-2CD-2\beta,\\
DBA=ABD-2D^2+A^2B-AB^2-2AD+2BD+2AB-2BC-A\beta-B\alpha-2\beta,\\
DCB=BCD-2D^2+B^2C-BC^2-2BD+2CD-2AC+2BC+B\alpha+B\beta-C\beta\\
\hphantom{DCB=}{}-4D+2\alpha+2\beta,\\
DCA=ACD+2D^2-A^2C+AC^2-2AD-2AC+2BC+2CD+A\alpha+A\beta-C\alpha\\
\hphantom{DCA=}{} -4D+2\alpha+2\beta.
\end{gather*}
Hence each of the overlap ambiguities is resolvable.

Let $M$ denote the free monoid with the alphabet set $S=\{A,B,C,D,\alpha,\beta\}$. Let $\ell\colon M\to {\mathbb N}$ denote the length function of~$M$. Consider an element $w=s_1 s_2\cdots s_n\in M$ where \mbox{$s_1,s_2,\ldots,s_n\in S$}. An operation on $w$ is called an {\it elementary operation} if it is one of the following actions on~$w$:
\begin{enumerate}\itemsep=0pt
\item[$\bullet$] We interchange $s_i$ and $s_j$ where $1\leq i<j\leq n$ and the position of $s_j$ is left to the position of $s_i$ in the list
\begin{gather*}
A, \quad B,\quad C,\quad D,\quad \alpha, \quad \beta.
\end{gather*}

\item[$\bullet$] Choose $s_i\in \{B,C,D\}$ and replace $s_i$ by the left neighbor of $s_i$ in the list
\begin{gather*}
A, \quad B, \quad C, \quad D.
\end{gather*}
\end{enumerate}
We define a binary relation $\preceq$ on $M$ as follows: For any $u,w\in M$ we say that
$u\rightarrow w$ whenever $\ell(u)<\ell(w)$ or $u$ is obtained from $w$ by an elementary operation. For any $u,w\in M$ we define $u\preceq w$ if there exist $u_0,u_1,\ldots,u_k\in M$ with $k\in {\mathbb N}$ such that
\begin{gather*}
u=u_0\rightarrow u_1\rightarrow
\cdots
\rightarrow u_{k-1}\rightarrow u_k=w.
\end{gather*}
By construction $\preceq$ is a partial order relation on $M$ satisfying the descending chain condition. Moreover $\preceq$ is a monoid partial order on $M$ compatible with the reduction system (\ref{eq:pres2-1})--(\ref{eq:pres2-8}). Therefore, by diamond lemma the monomials (\ref{e:basisURA3}) form an ${\mathbb F}$-basis for $\Re$.
\end{proof}

Recall that the dihedral group $D_6$ has a presentation with generators $\sigma$, $\tau$ and relations
\begin{gather}\label{D6}
\sigma^2=1,\qquad\tau^6=1,\qquad (\sigma\tau)^2=1.
\end{gather}

\begin{Proposition}[{\cite[Propositions~4.1 and~4.3]{SH:2017-1}}]\label{prop:D6&R} \samepage
There exists a unique $D_6$-action on $\Re$ such that $(i)$, $(ii)$ hold:
\begin{enumerate}\itemsep=0pt
\item[$(i)$] $\sigma$ acts on $\Re$ as an ${\mathbb F}$-algebra antiautomorphism of $\Re$ given in the following way:
\begin{table}[h!]\centering
\begin{tabular}{c|cccc|cccc}
$u$ &$A$ &$B$ &$C$ &$D$
&$\alpha$ &$\beta$ &$\gamma$ &$\delta$
\\
\midrule[1pt]
$\sigma(u)$ &$B$ &$A$ &$C$ &$D$
&$-\beta$ &$-\alpha$ &$-\gamma$ &$\delta$
\end{tabular}
\end{table}

\item[$(ii)$] $\tau$ acts on $\Re$ as an ${\mathbb F}$-algebra antiautomorphism of $\Re$ given in the following way:
\begin{table}[h!]\centering
\begin{tabular}{c|cccc|cccc}
$u$ &$A$ &$B$ &$C$ &$D$
&$\alpha$ &$\beta$ &$\gamma$ &$\delta$
\\
\midrule[1pt]
$\tau(u)$ &$B$ &$C$ &$A$ &$-D$
&$\beta$ &$\gamma$ &$\alpha$ &$\delta$
\end{tabular}
\end{table}
\end{enumerate}
Moreover the $D_6$-action on $\Re$ is faithful.
\end{Proposition}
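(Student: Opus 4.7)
My plan is first to exhibit $\sigma$ and $\tau$ as $\mathbb{F}$-algebra antiautomorphisms of $\Re$ via the presentation in Proposition~\ref{prop:presentation1}, then to verify the dihedral relations~(\ref{D6}), and finally to deduce uniqueness and faithfulness.

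For existence I extend each of $\sigma$ and $\tau$, using the tables in (i) and (ii), to an $\mathbb{F}$-linear antihomomorphism of the free $\mathbb{F}$-algebra on $\{A,B,C,D,\alpha,\beta\}$ and then verify that every defining relation (\ref{eq:pres2-1})--(\ref{eq:pres2-8}) is preserved. The key observation is that for any antiautomorphism $\phi$ one has $\phi([x,y])=-[\phi(x),\phi(y)]$, so, for instance, $\sigma([A,B])=[A,B]=2D=2\sigma(D)$ while $\tau([A,B])=-[B,C]=-2D=2\tau(D)$, and similarly for the other two commutator relations. The three relations (\ref{eq:pres2-4})--(\ref{eq:pres2-6}) reduce after expansion to the four identities $\sigma(\alpha)=-\beta$, $\sigma(\beta)=-\alpha$, $\tau(\alpha)=\beta$, $\tau(\beta)=-\alpha-\beta$, each verified by a short direct computation (the last using Lemma~\ref{lem:alp&bet&gam&del}(iii) to replace $\gamma$ by $-\alpha-\beta$). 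Because any antiautomorphism carries central elements to central elements, the centrality relations (\ref{eq:pres2-7}) and (\ref{eq:pres2-8}) are preserved automatically.

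To verify $\sigma^2=1$, $\tau^6=1$, and $(\sigma\tau)^2=1$ as maps on $\Re$, it suffices by Lemma~\ref{lem:alp&bet&gam&del}(i) to check each equality on the generators $A,B,C$, and this is immediate from the tables. Uniqueness is then obvious, since any $D_6$-action on $\Re$ is determined by the images of $\sigma$ and $\tau$, and each of those two maps on $\Re$ is an antiautomorphism determined by its values on a set of algebra generators.

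For faithfulness I tabulate the action of each of the twelve elements of $D_6$ on the tuple $(A,B,C,D)$: each such element induces one of the six permutations of $\{A,B,C\}$ together with a sign $\pm 1$ on $D$, and inspection confirms that all twelve resulting tuples are pairwise distinct. Since $A,B,C,D,-D$ are pairwise distinct elements of $\Re$ by Theorem~\ref{thm:basisURA3}, any nontrivial $g\in D_6$ moves at least one of $A,B,C,D$ and therefore cannot act as the identity. The most labour-intensive step is the first one---checking the eight families of relations under both $\sigma$ and $\tau$---but each individual verification is short.
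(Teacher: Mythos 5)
Your proof is correct. Note, however, that the paper does not actually prove this statement: Proposition~\ref{prop:D6&R} is imported verbatim from \cite[Propositions~4.1 and~4.3]{SH:2017-1}, so there is no paper-internal proof to compare against. What you have written is a self-contained replacement, and it closely parallels what the paper does for the Bannai--Ito analogue (Proposition~\ref{prop:D6&BI}): existence by checking the defining relations, uniqueness because $\sigma$, $\tau$ generate $D_6$ and are antiautomorphisms determined on algebra generators. The one genuine difference is faithfulness: the paper's $\mathfrak{BI}$-version delegates this to \cite[Lemma~4.2]{SH:2017-1} via the observation that $\tau$ has order~$6$, whereas your enumeration of the twelve (permutation of $\{A,B,C\}$, sign on $D$) pairs is more elementary and needs only Theorem~\ref{thm:basisURA3}; I checked that all twelve pairs are indeed realized and distinct, so the argument goes through. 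Two small points you should make explicit rather than leave implicit: (1) checking $\sigma^2=1$, $\tau^6=1$, $(\sigma\tau)^2=1$ only on $A$, $B$, $C$ is legitimate because each of these maps is a composition of an \emph{even} number of antiautomorphisms, hence an algebra homomorphism, and $\Re$ is generated by $A$, $B$, $C$ by Lemma~\ref{lem:alp&bet&gam&del}(i); (2) the distinctness $D\neq -D$ uses $\operatorname{char}{\mathbb F}\neq 2$ together with $D\neq 0$ from Theorem~\ref{thm:basisURA3}, not the basis theorem alone. Also, in the existence step one should note that the images of $\gamma=-\alpha-\beta$ and $\delta=A+B+C$ forced by your assignments agree with the tables (a one-line check). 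None of these affect the validity of the argument.
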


Let $\mathfrak{C}$ denote the ${\mathbb F}$-subalgebra of $\Re$ generated by $\alpha$, $\beta$, $\gamma$, $\delta$. It follows from Lemma~\ref{lem:alp&bet&gam&del}(ii) that $\mathfrak{C}$ is commutative.

\begin{Definition}[{\cite[Definition 5.2]{SH:2017-1}}]\label{defn:Casimir}
The coset
\begin{gather*}
D^2+A^2+B^2+\frac{(\delta+2)\{A,B\}-\big\{A^2,B\big\}-\big\{A,B^2\big\}}{2} +A (\beta-\delta) +B (\delta-\alpha)+\mathfrak{C}
\end{gather*}
is called the {\it Casimir class} of $\Re$. Each element of the Casimir class of $\Re$ is called a {\it Casimir element} of $\Re$.
\end{Definition}

Define
\begin{gather}
\Omega_A=D^2+\frac{B A C+C A B}{2}+ A^2+B \gamma-C \beta-A \delta,\label{eq:CasA}\\
\Omega_B=D^2+\frac{C B A+A B C}{2}+ B^2+C \alpha-A \gamma-B\delta,\label{eq:CasB}\\
\Omega_C=D^2+\frac{A C B+B C A}{2}+ C^2+A \beta-B\alpha-C\delta.\label{eq:CasC}
\end{gather}
Note that $\Omega_A$, $\Omega_B$, $\Omega_C$ are mutually distinct \cite[Corollary 6.5]{SH:2017-1}.

\begin{Lemma}[{\cite[Proposition 3.7]{SH:2017-1}}]\label{lem:Cas&central}
Each of $\Omega_A$, $\Omega_B$, $\Omega_C$ is a Casimir element.
\end{Lemma}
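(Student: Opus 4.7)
The plan is to verify that each of $\Omega_A$, $\Omega_B$, $\Omega_C$ lies in the coset $\Omega_0 + \mathfrak{C}$ of Definition~\ref{defn:Casimir}, where $\Omega_0$ denotes the representative
\begin{gather*}
\Omega_0 = D^2 + A^2 + B^2 + \tfrac{1}{2}\big((\delta+2)\{A,B\} - \{A^2,B\} - \{A, B^2\}\big) + A(\beta-\delta) + B(\delta-\alpha).
\end{gather*}
I would proceed in two stages: first establish the pairwise equivalences $\Omega_A \equiv \Omega_B \equiv \Omega_C \pmod{\mathfrak{C}}$; then show that one of them, say $\Omega_A$, itself lies in $\Omega_0 + \mathfrak{C}$. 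The other two cases follow by transitivity along $\mathfrak{C}$-equivalence.

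For the first stage, I would compute $\Omega_A - \Omega_B$ directly. Rewriting $BAC + CAB = A\{B,C\} + 2D(B-C)$ by means of $BA = AB - 2D$ and $CA = AC + 2D$, and combining with the analogous expression for $CBA + ABC$ and the commutator $[D,C] = -AC + BC - 2D + \alpha + \beta$ from relation~\eqref{eq:pres2-6}, one obtains $\tfrac{1}{2}((BAC+CAB)-(CBA+ABC)) = AC - CB + \gamma$. Combining this with the identity $A^2 - B^2 + AC - CB = (A-B)\delta$ (which follows from $A+C = \delta - B$ and $B+C = \delta - A$) and collapsing the central terms via $(A+B+C)\gamma = \delta\gamma$ and $\alpha+\beta+\gamma=0$, the difference telescopes to $\Omega_A - \Omega_B = (\delta+1)\gamma \in \mathfrak{C}$. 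Cyclic repetition, or equivalently application of the $D_6$-antiautomorphism $\tau$ from Proposition~\ref{prop:D6&R} (which satisfies $\tau(\Omega_A) = \Omega_B$ and $\tau(\Omega_B) = \Omega_C$), then yields $\Omega_B - \Omega_C = (\delta+1)\alpha$ and $\Omega_C - \Omega_A = (\delta+1)\beta$, both in $\mathfrak{C}$.

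For the second stage, I would reduce $\Omega_A - \Omega_0$ to an element of $\mathfrak{C}$. After substituting $\gamma = -\alpha - \beta$ and $C = \delta - A - B$ (Lemma~\ref{lem:alp&bet&gam&del}) to eliminate $\gamma$ and $C$, the cubic part of $\Omega_A$ expands as $BAC + CAB = \delta\{A,B\} - \{A^2,B\} - 2BAB$. I would then trade the remaining $BAB$ for $\tfrac{1}{2}\{A, B^2\}$ modulo central residues via $AB^2 + B^2 A = 2BAB + 2[D, B]$ together with $[B, D] = \beta - BA + CB$ from relation~\eqref{eq:pres2-5}. After this substitution, the matching fraction inside $\Omega_0$ is hit exactly, and the leftover non-central residue collapses via $AB + B^2 + CB = \delta B$, leaving a polynomial in $\alpha, \beta, \delta$ that lies in $\mathfrak{C}$.

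The main obstacle will be the commutator bookkeeping in the second stage: each swap in the chain $BA \leftrightarrow AB$, $BAB \leftrightarrow \tfrac{1}{2}\{A,B^2\}$, $CB \leftrightarrow BC$ generates linear residues in $A, B, C, D$ that must be matched against the pieces $A(\beta - \delta)$ and $B(\delta - \alpha)$ of $\Omega_0$. The key organising principle will be to apply $A + B + C = \delta$ and $\alpha + \beta + \gamma = 0$ at precisely the right moment, forcing the otherwise unwieldy non-central residues of the form $A\beta$, $B\alpha$, $C\gamma$, $B\delta$ to telescope into $\mathfrak{C}$.
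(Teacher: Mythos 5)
The paper never proves this lemma at all---it is imported verbatim from \cite[Proposition~3.7]{SH:2017-1}---so there is no internal argument to compare with, and your attempt has to stand on its own. Stage 1 does: a direct check confirms $\tfrac{1}{2}\big((BAC+CAB)-(CBA+ABC)\big)=[C,D]=\gamma-CB+AC$ and hence $\Omega_A-\Omega_B=(\delta+1)\gamma$, with the other two differences following from the $\tau$-symmetry exactly as you say (this is also consistent with the explicit formulas (\ref{OmegaA})--(\ref{OmegaC}) of Proposition~\ref{prop:D6Casmir}).

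Stage 2 is where the argument breaks. Carrying your own outline to the end, with $\Omega_0$ denoting the representative printed in Definition~\ref{defn:Casimir}, one obtains the exact identity $\Omega_A-\Omega_0=-2\delta B-(\delta+1)\beta$; equivalently, substituting $C=\delta-A-B$ into (\ref{eq:CasC}) gives on the nose
\begin{gather*}
\Omega_C=D^2+A^2+B^2+\frac{(\delta+2)\{A,B\}-\big\{A^2,B\big\}-\big\{A,B^2\big\}}{2}+A(\beta-\delta)-B(\alpha+\delta),
\end{gather*}
which differs from $\Omega_0$ by $2\delta B$. Your closing step, ``the leftover residue collapses via $AB+B^2+CB=\delta B$, leaving a polynomial in $\alpha,\beta,\delta$'', overlooks that $\delta B$ does not lie in $\mathfrak{C}$: every element of $\mathfrak{C}$ is central, whereas $[\delta B,A]=-2\delta D\neq 0$ by Theorem~\ref{thm:basisURA3}. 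So, measured against Definition~\ref{defn:Casimir} exactly as printed, the residue does not land in $\mathfrak{C}$ and the claimed coset membership fails. What your computation actually detects, if carried out honestly, is that the printed representative cannot be the intended one: it equals the central element $\Omega_A$ plus $2\delta B+(\delta+1)\beta$ and hence is not central, contradicting the paper's own assertion that Casimir elements are central; the term $B(\delta-\alpha)$ is presumably a misprint for $-B(\delta+\alpha)$, in which form $\Omega_C$ coincides with the representative exactly and your Stage 1 finishes the proof. A correct write-up must either do the Stage 2 bookkeeping explicitly and flag this sign discrepancy, or compare with $\Omega_C$ (where the single substitution $C=\delta-A-B$ does all the work, with no $BAB$ juggling) against the corrected representative; as it stands, the decisive computation is asserted rather than performed, and performed literally it refutes rather than establishes the statement in its printed form.
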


\begin{Lemma}[{\cite[Lemma 3.6]{SH:2017-1}}]\label{lem:D6&Cas}
The set $\{\Omega_A,\Omega_B,\Omega_C\}$ is invariant under the $D_6$-action on $\Re$.
Moreover the restrictions of $\sigma$ and $\tau$ to $\{\Omega_A,\Omega_B,\Omega_C\}$ are as follows:
\begin{table}[h]
\centering
\begin{tabular}{c|ccc}
$u$ &$\Omega_A$ &$\Omega_B$ &$\Omega_C$\\
\midrule[1pt]
$\sigma(u)$ &$\Omega_B$ &$\Omega_A$ &$\Omega_C$\\
$\tau(u)$ &$\Omega_B$ &$\Omega_C$ &$\Omega_A$
\end{tabular}
\end{table}
\end{Lemma}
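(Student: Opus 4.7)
The plan is to verify the table directly by term-by-term computation on the defining formulas \eqref{eq:CasA}--\eqref{eq:CasC}, exploiting two facts from the set-up: $\sigma$ and $\tau$ are \emph{anti}-automorphisms of $\Re$ whose action on the generators $A,B,C,D$ and on the central elements $\alpha,\beta,\gamma,\delta$ is tabulated in Proposition~\ref{prop:D6&R}, and by Lemma~\ref{lem:alp&bet&gam&del}(ii) the elements $\alpha,\beta,\gamma,\delta$ are central in $\Re$, so after they are moved through $\sigma$ or $\tau$ they may be commuted back to any convenient position.

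The first and representative computation is $\tau(\Omega_A)$. The $D^2$ term is fixed because $\tau(D)=-D$ and $(-D)^2=D^2$. The antiautomorphism property gives $\tau(BAC)=\tau(C)\tau(A)\tau(B)=ABC$ and $\tau(CAB)=CBA$, so the symmetric half-sum $\tfrac{BAC+CAB}{2}$ is sent to $\tfrac{ABC+CBA}{2}$; note that writing $\Omega_A$ as a symmetric sum is precisely what makes it well-behaved under antiautomorphisms. Then $A^2\mapsto B^2$, and using centrality of $\alpha,\gamma,\delta$ the remaining terms $B\gamma$, $C\beta$, $A\delta$ become $C\alpha$, $A\gamma$, $B\delta$. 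Collecting yields $\Omega_B$, which also matches the $\tau$-entry for $\Omega_A$ in the table.

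At this point I would invoke cyclic symmetry: the expressions $\Omega_A,\Omega_B,\Omega_C$ are obtained from each other by the substitution $(A,B,C,\alpha,\beta,\gamma)\mapsto(B,C,A,\beta,\gamma,\alpha)$, and after applying $\tau$ the sign on $D$ gets squared away everywhere it appears (only $D^2$ and $D^0$ occur), so the same computation instantly yields $\tau(\Omega_B)=\Omega_C$ and $\tau(\Omega_C)=\Omega_A$. For the $\sigma$-row, the analogous bookkeeping---using $\sigma(A)=B$, $\sigma(C)=C$, $\sigma(\alpha)=-\beta$, $\sigma(\beta)=-\alpha$, $\sigma(\gamma)=-\gamma$---gives $\sigma(\Omega_A)=\Omega_B$, whence $\sigma(\Omega_B)=\Omega_A$ follows from $\sigma^2=1$. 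The only entry that is not automatic is $\sigma(\Omega_C)=\Omega_C$, which still reduces to a direct check: $\sigma$ fixes $D^2$ and $C^2$, sends $ACB$ to $BCA$ and vice versa, and the two sign flips $\sigma(A\beta)=-B\alpha$ and $\sigma(B\alpha)=-A\beta$ exactly preserve the combination $A\beta-B\alpha$, while $\sigma(C\delta)=C\delta$.

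I expect no serious obstacle. The whole argument is routine calculation once one (i)~keeps the antiautomorphism convention straight on the cubic terms, and (ii)~tracks the sign flips of the central elements under $\sigma$. The structural reason everything works is that each $\Omega_\star$ has been written as a sum of terms each of which is either a square, a symmetric anticommutator-type expression, or a product of a generator with a central, so anti-automorphisms permute these building blocks cleanly.
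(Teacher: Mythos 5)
Your verification is correct, and there is nothing in the paper to compare it against: the paper states Lemma \ref{lem:D6&Cas} as a quotation of \cite[Lemma~3.6]{SH:2017-1} and gives no proof, and your direct term-by-term check using the antiautomorphism tables of Proposition \ref{prop:D6&R} together with centrality of $\alpha,\beta,\gamma,\delta$ is exactly the expected argument (the transfer to $\tau(\Omega_B)$, $\tau(\Omega_C)$ by repeating the computation with the letters shifted, and $\sigma(\Omega_B)=\Omega_A$ from $\sigma^2=1$, are both legitimate). One small slip that does not affect the conclusion: $\sigma$ does not interchange $ACB$ and $BCA$; since $\sigma$ reverses the word and swaps $A\leftrightarrow B$, one has $\sigma(ACB)=\sigma(B)\sigma(C)\sigma(A)=ACB$ and likewise $\sigma(BCA)=BCA$, so each cubic word is fixed individually and the half-sum $\tfrac{ACB+BCA}{2}$ is preserved, giving $\sigma(\Omega_C)=\Omega_C$ as claimed.
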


\begin{Definition}[{\cite[Section 5]{SH:2017-1}}]
The elements $\Omega_A$, $\Omega_B$, $\Omega_C$ are called the {\it $D_6$-symmetric Casimir elements} of~$\Re$.
\end{Definition}

\section[The Bannai--Ito algebra $\mathfrak{BI}$]{The Bannai--Ito algebra $\boldsymbol{\mathfrak{BI}}$}\label{s:BI}

\begin{Definition}\label{defn:BI}
The {\it Bannai--Ito algebra} $\mathfrak{BI}$ is an ${\mathbb F}$-algebra defined by generators and relations.
The generators are $X$, $Y$, $Z$ and the relations assert that each of
$\{X,Y\}-Z$, $\{Y,Z\}-X$, $\{Z,X\}-Y$ is central in $\mathfrak{BI}$.
\end{Definition}

We define $\iota$, $\kappa$, $\lambda$, $\mu$ as the following elements of $\mathfrak{BI}$:
\begin{gather}
\iota = X+Y+Z, \label{e:iota}\\
\kappa = \{X,Y\}-Z, \label{e:kappa} \\
\lambda = \{Y,Z\}-X, \label{e:lambda}\\
\mu = \{Z,X\}-Y. \label{e:mu}
\end{gather}

\begin{Proposition}\label{prop:D6&BI}
There exists a unique $D_6$-action on $\mathfrak{BI}$ such that $(i)$, $(ii)$ hold:
\begin{enumerate}\itemsep=0pt
\item[$(i)$] $\sigma$ acts on $\mathfrak{BI}$ as an ${\mathbb F}$-algebra antiautomorphism of $\mathfrak{BI}$ given in the following way:
\begin{table}[h!]\centering
\begin{tabular}{c|ccc|cccc}
$u$ &$X$ &$Y$ &$Z$
&$\iota$ &$\kappa$ &$\lambda$ &$\mu$
\\
\midrule[1pt]
$\sigma(u)$ &$Y$ &$X$ &$Z$
&$\iota$ &$\kappa$ &$\mu$ &$\lambda$
\end{tabular}
\end{table}

\item[$(ii)$] $\tau$ acts on $\mathfrak{BI}$ as an ${\mathbb F}$-algebra antiautomorphism of $\mathfrak{BI}$ given in the following way:
\begin{table}[h!]\centering
\begin{tabular}{c|ccc|cccc}
$u$ &$X$ &$Y$ &$Z$
&$\iota$ &$\kappa$ &$\lambda$ &$\mu$\\
\midrule[1pt]
$\tau(u)$ &$Y$ &$Z$ &$X$
&$\iota$ &$\lambda$ &$\mu$ &$\kappa$
\end{tabular}
\end{table}
\end{enumerate}
Moreover the $D_6$-action on $\mathfrak{BI}$ is faithful.
\end{Proposition}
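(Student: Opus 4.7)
The plan is to parallel the argument used for Proposition~\ref{prop:D6&R}. First I would construct $\sigma$ by invoking the universal property of $\mathfrak{BI}$: the assignment $X\mapsto Y$, $Y\mapsto X$, $Z\mapsto Z$ extends to an antihomomorphism $\mathfrak{BI}\to\mathfrak{BI}$ provided the defining relations are respected. Because antihomomorphisms send anticommutators to anticommutators and commutators to their negatives, the image of a central element is central, so it suffices to verify that $\sigma(\kappa)=\kappa$, $\sigma(\lambda)=\mu$, $\sigma(\mu)=\lambda$ are all central, which is immediate from Definition~\ref{defn:BI}. The construction of $\tau$ is analogous, with $\tau(\kappa)=\lambda$, $\tau(\lambda)=\mu$, $\tau(\mu)=\kappa$.

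To verify the defining relations~(\ref{D6}) of $D_6$, observe that $\sigma^2$, $\tau^2$ and $\sigma\tau$ are automorphisms, so each can be checked solely on the generators $X$, $Y$, $Z$. The map $\sigma^2$ fixes every generator, giving $\sigma^2=1$. The map $\tau^2$ sends $X\mapsto Z$, $Y\mapsto X$, $Z\mapsto Y$, so $\tau^6=(\tau^2)^3$ fixes every generator and equals $1$. The map $\sigma\tau$ sends $X\mapsto X$, $Y\mapsto Z$, $Z\mapsto Y$, so $(\sigma\tau)^2$ fixes every generator and equals $1$. This yields a $D_6$-action, and uniqueness is automatic since any such action is determined by the images of $\sigma$ and $\tau$, which are in turn determined by their prescribed values on the generators of $\mathfrak{BI}$.

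The main obstacle is faithfulness. I would split the twelve elements of $D_6$ into the six automorphism elements $\{1,\tau^2,\tau^4,\sigma\tau,\sigma\tau^3,\sigma\tau^5\}$ and the six antiautomorphism elements, and observe by direct computation that each subset induces on $\{X,Y,Z\}$ each of the six permutations in $S_3$ exactly once. Granting that $X$, $Y$, $Z$ are pairwise distinct in $\mathfrak{BI}$, distinct permutations give distinct maps on generators and hence distinct algebra maps, so the six automorphisms are mutually distinct, and likewise for the six antiautomorphisms. An automorphism coinciding with an antiautomorphism would force the identity map to be an antihomomorphism, making $\mathfrak{BI}$ commutative. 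To block both possibilities I would invoke the basis~(\ref{eq:basisBI}) for $\mathfrak{BI}$, which certifies that $X$, $Y$, $Z$ are linearly independent over $\mathbb{F}$ and that $YX=\kappa+Z-XY$ differs from $XY$ in the basis expansion, so $\mathfrak{BI}$ is noncommutative. Consequently the twelve elements of $D_6$ act by twelve distinct maps, and the action is faithful.
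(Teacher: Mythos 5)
Your argument is correct, and the existence and uniqueness parts coincide with the paper's (the paper simply declares the verification of the $D_6$-relations ``straightforward'' and gets uniqueness from the fact that $\sigma$, $\tau$ generate $D_6$, exactly as you do). Where you genuinely diverge is faithfulness: the paper observes that the antiautomorphism $\tau$ of part~(ii) has order $6$ and then invokes the external result \cite[Lemma~4.2]{SH:2017-1}, whereas you give a self-contained count, splitting $D_6$ into the six elements acting as automorphisms and the six acting as antiautomorphisms, noting each class realizes all six permutations of $\{X,Y,Z\}$, and then separating the two classes by the observation that an automorphism equal to an antiautomorphism would make $\mathfrak{BI}$ commutative. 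Both routes ultimately need the same two facts — that $X$, $Y$, $Z$ are pairwise distinct and that $\mathfrak{BI}$ is noncommutative (the paper needs them implicitly to justify that $\tau$ has order $6$, a point it leaves unproved) — and you supply them honestly from the Poincar\'e--Birkhoff--Witt basis of Theorem~\ref{thm:basisBI}, e.g., via $YX=-XY+Z+\kappa\neq XY$. One remark on logical order: in the paper Theorem~\ref{thm:basisBI} appears after Proposition~\ref{prop:D6&BI}, so your proof uses a forward reference; this is harmless because the basis is obtained from Proposition~\ref{prop:reductionBI} by the diamond lemma, with no appeal to the $D_6$-action, so no circularity arises, but in a final write-up you should either reorder the statements or say explicitly that the basis theorem is independent of the proposition. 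Your approach buys a proof that is self-contained (no citation to \cite{SH:2017-1}) and in fact shows the stronger statement that the twelve group elements act by twelve pairwise distinct maps; the paper's approach is shorter and delegates the group-theoretic bookkeeping to a lemma designed for reuse.
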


\begin{proof}It is straightforward to verify the existence of the $D_6$-action on $\mathfrak{BI}$ by using (\ref{D6}) and Definition \ref{defn:BI}. Since $D_6$ is generated by $\sigma$ and $\tau$ the uniqueness follows. The ${\mathbb F}$-algebra antiautomorphism of $\mathfrak{BI}$ given in (ii) is of order $6$. It follows from \cite[Lemma~4.2]{SH:2017-1} that the $D_6$-action on $\mathfrak{BI}$ is faithful.
\end{proof}

\begin{Proposition}\label{prop:reductionBI}
The ${\mathbb F}$-algebra $\mathfrak{BI}$ has a presentation with generators $X$, $Y$, $Z$, $\kappa$, $\lambda$, $\mu$ and relations
\begin{gather*}
YX=-XY+Z+\kappa,\qquad
ZY=-YZ+X+\lambda,\qquad
ZX=-XZ+Y+\mu,\\
\kappa X=X\kappa,\qquad\kappa Y=Y\kappa,\qquad\kappa Z=Z\kappa,\\
\lambda X=X\lambda,\qquad \lambda Y=Y\lambda,\qquad \lambda Z=Z\lambda,\qquad \lambda\kappa=\kappa\lambda,\\
\mu X=X\mu,\qquad \mu Y=Y\mu,\qquad\mu Z=Z\mu,\qquad\mu\kappa=\kappa\mu,\qquad\mu\lambda=\lambda\mu.
\end{gather*}
\end{Proposition}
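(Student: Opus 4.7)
The plan is to exhibit mutually inverse $\mathbb{F}$-algebra homomorphisms between $\mathfrak{BI}$ (as defined in Definition \ref{defn:BI}) and the $\mathbb{F}$-algebra $\mathfrak{BI}'$ presented by the generators $X,Y,Z,\kappa,\lambda,\mu$ and the relations listed in the proposition. Once this is done, the identification $\mathfrak{BI}\cong \mathfrak{BI}'$ is precisely the asserted presentation.

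For one direction I would define $\phi\colon \mathfrak{BI}'\to \mathfrak{BI}$ by sending $X,Y,Z$ to themselves and sending $\kappa,\lambda,\mu$ to the central elements $\{X,Y\}-Z$, $\{Y,Z\}-X$, $\{Z,X\}-Y$ of $\mathfrak{BI}$ given by (\ref{e:kappa})--(\ref{e:mu}). I then verify that each defining relation of $\mathfrak{BI}'$ is satisfied in $\mathfrak{BI}$: the three relations of anticommutator type, such as $YX=-XY+Z+\kappa$, are just rewritings of the equations $\{X,Y\}=Z+\kappa$, $\{Y,Z\}=X+\lambda$, $\{Z,X\}=Y+\mu$; the commutation relations $\kappa X=X\kappa$, $\kappa Y=Y\kappa$, $\kappa Z=Z\kappa$ (and the analogues for $\lambda$ and $\mu$) follow from the defining centrality of $\{X,Y\}-Z$, $\{Y,Z\}-X$, $\{Z,X\}-Y$ in Definition \ref{defn:BI}; and the mutual commutation relations among $\kappa,\lambda,\mu$ also follow because each of them is central. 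Hence $\phi$ is a well-defined $\mathbb{F}$-algebra homomorphism.

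For the reverse direction I would define $\psi\colon \mathfrak{BI}\to \mathfrak{BI}'$ by sending $X,Y,Z$ to the images of $X,Y,Z$ in $\mathfrak{BI}'$. To see that $\psi$ is well defined I must check that $\{X,Y\}-Z$, $\{Y,Z\}-X$, $\{Z,X\}-Y$ are central in $\mathfrak{BI}'$. Using the first three relations of $\mathfrak{BI}'$, these three elements are equal in $\mathfrak{BI}'$ to $\kappa$, $\lambda$, $\mu$ respectively; the remaining relations of $\mathfrak{BI}'$ then state exactly that $\kappa$, $\lambda$, $\mu$ commute with each of $X,Y,Z$, which gives the required centrality.

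Finally I verify $\phi\circ\psi=\mathrm{id}_{\mathfrak{BI}}$ and $\psi\circ\phi=\mathrm{id}_{\mathfrak{BI}'}$ on generators. The only nonobvious check is that $\psi(\phi(\kappa))=\kappa$ in $\mathfrak{BI}'$, and this is immediate because in $\mathfrak{BI}'$ the relation $YX=-XY+Z+\kappa$ gives $\{X,Y\}-Z=\kappa$; the same argument applies to $\lambda$ and $\mu$. The whole argument is essentially bookkeeping: there is no genuine obstacle, but care is needed to make sure that the substitution $\kappa\leftrightarrow \{X,Y\}-Z$ (and its cyclic variants) is used consistently so that the anticommutator relations of the new presentation and the centrality relations of the original definition are seen to encode the same information.
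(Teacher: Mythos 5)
Your proof is correct and is exactly the argument the paper compresses into its one-line proof (``Immediate from Definition \ref{defn:BI}''): the new presentation just adjoins the redundant generators $\kappa,\lambda,\mu$ with relations identifying them with the central elements $\{X,Y\}-Z$, $\{Y,Z\}-X$, $\{Z,X\}-Y$, and your mutually inverse homomorphisms make that identification explicit. No gap; you have simply written out the bookkeeping the paper leaves to the reader.
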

\begin{proof}Immediate from Definition \ref{defn:BI}.
\end{proof}

Applying the diamond lemma to Proposition~\ref{prop:reductionBI}, we obtain the following Poincar\'{e}--Birkhoff--Witt basis for $\mathfrak{BI}$. Since the argument is similar to the proof of Theorem~\ref{thm:basisURA3}, we omit the proof here.

\begin{Theorem}\label{thm:basisBI}
The elements
\begin{gather}\label{e:basisBI}
X^i Y^j Z^k \kappa^r \lambda^s \mu^t
\qquad
\text{for all $i,j,k,r,s,t\in {\mathbb N}$}
\end{gather}
form an ${\mathbb F}$-basis for $\mathfrak{BI}$.
\end{Theorem}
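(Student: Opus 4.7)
The plan is to mirror the proof of Theorem~\ref{thm:basisURA3} and invoke the diamond lemma \cite{diamond1978} applied to the reduction system given by Proposition~\ref{prop:reductionBI}. Regard each relation of Proposition~\ref{prop:reductionBI} as a rewrite rule whose left-hand side (a two-letter word in the free monoid $M$ on $\{X,Y,Z,\kappa,\lambda,\mu\}$) is replaced by its right-hand side. A word in $M$ is irreducible under this system precisely when it contains no occurrence of $YX$, $ZY$, $ZX$, nor of any pair in which a central letter $\kappa$, $\lambda$, $\mu$ stands to the left of a non-central one or violates the order $\kappa, \lambda, \mu$ among themselves. Thus the irreducible monomials are exactly the ordered products $X^iY^jZ^k\kappa^r\lambda^s\mu^t$ appearing in (\ref{e:basisBI}), and it suffices to show that all ambiguities of the reduction system are resolvable.

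There are no inclusion ambiguities since every left-hand side is a product of exactly two generators and no two left-hand sides share a letter in the required pattern. The only nontrivial overlap ambiguity is the word $ZYX$, coming from the overlap of $ZY$ with $YX$. The remaining overlap ambiguities all involve at least one central generator $\kappa$, $\lambda$, $\mu$ in the middle position and are trivially resolvable because the rewrite rules for central letters merely permute these letters past the others without changing the outcome. For $ZYX$ one reduces $ZY$ first, successively applies the rules, and uses the commutativity of $\kappa$, $\lambda$, $\mu$ with $X$, $Y$, $Z$ to obtain
\begin{gather*}
ZYX=-XYZ+X^{2}-Y^{2}+Z^{2}+X\lambda-Y\mu+Z\kappa;
\end{gather*}
reducing $YX$ first and proceeding dually yields the same expression, so $ZYX$ is resolvable.

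To finish, one must supply a monoid partial order on $M$ that satisfies the descending chain condition and is compatible with the reduction system. Following the template of Theorem~\ref{thm:basisURA3}, define elementary operations on a word $s_1s_2\cdots s_n\in M$ to be either (a) the transposition of two letters $s_i$ and $s_j$ with $i<j$ whenever $s_j$ precedes $s_i$ in the list $X,Y,Z,\kappa,\lambda,\mu$, or (b) the replacement of $s_i\in\{Y,Z\}$ by its immediate left neighbor in the list $X,Y,Z$. Declare $u\to w$ when $\ell(u)<\ell(w)$ or $u$ is obtained from $w$ by an elementary operation, and let $\preceq$ be the transitive closure. Then $\preceq$ is a monoid partial order with the descending chain condition, and every rewrite rule of Proposition~\ref{prop:reductionBI} replaces its left-hand side by an $\mathbb{F}$-linear combination of strictly smaller words.

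The main obstacle is the resolution of the $ZYX$ overlap: both reduction paths generate long intermediate sums and one must carefully commute each central letter into its canonical position and collect terms. All other checks are either formal (no inclusion ambiguities) or a bookkeeping exercise (constructing the monoid order). Once these are in place, the diamond lemma delivers the stated $\mathbb{F}$-basis.
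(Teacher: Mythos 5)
Your proposal is correct and is essentially the argument the paper intends: it omits the details precisely because the proof is the diamond-lemma argument applied to Proposition~\ref{prop:reductionBI}, modeled on the proof of Theorem~\ref{thm:basisURA3}, which is exactly what you carry out (your resolution of the $ZYX$ overlap, $ZYX=-XYZ+X^{2}-Y^{2}+Z^{2}+X\lambda-Y\mu+Z\kappa$ along both reduction paths, checks out). The only quibble is cosmetic: the remaining overlaps such as $\kappa YX$ or $\lambda ZX$ have the central letter in the first rather than the middle position, but they are still trivially resolvable as you say.
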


Let $\mathcal A$ denote an ${\mathbb F}$-algebra and let $\mathcal H, \mathcal K$ denote two ${\mathbb F}$-subspaces of $\mathcal A$. The product $\mathcal H\cdot \mathcal K$ is meant to be the ${\mathbb F}$-subspace of $\mathcal A$ spanned by $h\cdot k$ for all $h\in \mathcal H$ and all $k\in \mathcal K$. Recall that an {\it ${\mathbb N}$-filtration} of $\mathcal A$ is a sequence $\{\mathcal A_n\}_{n\in {\mathbb N}}$ of ${\mathbb F}$-subspaces of $\mathcal A$ satisfies the following conditions:
\begin{enumerate}\itemsep=0pt
\item[(N1)] $\bigcup_{n\in {\mathbb N}} \mathcal A_n=\mathcal A$.
\item[(N2)] $\mathcal A_n\subseteq \mathcal A_{n+1}$ for all $n\in {\mathbb N}$.
\item[(N3)] $\mathcal A_m\cdot \mathcal A_n\subseteq \mathcal A_{m+n}$ for all $m,n\in {\mathbb N}$.
\end{enumerate}
For convenience we always let $\mathcal A_{-1}$ denote the zero subspace of $\mathcal A$.

We consider the following ${\mathbb F}$-subspaces of $\mathfrak{BI}$ induced from Theorem \ref{thm:basisBI}: Let $w_X$, $w_Y$, $w_Z$, $w_\kappa$, $w_\lambda$, $w_\mu$ denote the nonnegative integers. For each $n\in {\mathbb N}$ let $\mathfrak{BI}_n$ denote the ${\mathbb F}$-subspace of~$\mathfrak{BI}$ spanned by $X^i Y^j Z^k \kappa^r \lambda^s \mu^t$ for all $i,j,k,r,s,t\in {\mathbb N}$ with
\begin{gather*}
w_Xi+w_Yj+w_Zk+w_\kappa r+w_\lambda s+w_\mu t\leq n.
\end{gather*}
We call $\{\mathfrak{BI}_n\}_{n\in {\mathbb N}}$ the {\it ${\mathbb F}$-subspaces of $\mathfrak{BI}$ associated with $(w_X, w_Y, w_Z, w_\kappa, w_\lambda, w_\mu)$}.
In what follows we give a simple criterion for the above ${\mathbb F}$-subspaces of $\mathfrak{BI}$ to be an ${\mathbb N}$-filtration of $\mathfrak{BI}$.

\begin{Theorem}\label{thm:filtrationBI}
Let $w_X, w_Y, w_Z, w_\kappa, w_\lambda, w_\mu\in {\mathbb N}$.
Let $\{\mathfrak{BI}_n\}_{n\in {\mathbb N}}$ denote the ${\mathbb F}$-subspaces of $\mathfrak{BI}$ associated with $(w_X, w_Y, w_Z, w_\kappa, w_\lambda, w_\mu)$. Then $\{\mathfrak{BI}_n\}_{n\in {\mathbb N}}$ is an ${\mathbb N}$-filtration of $\mathfrak{BI}$
if and only if
\begin{gather}
\max\{w_Z, w_\kappa\} \leq w_X+w_Y,\label{ineq1}\\
\max\{w_X, w_\lambda\} \leq w_Y+w_Z,\label{ineq2}\\
\max\{w_Y, w_\mu\} \leq w_Z+w_X.\label{ineq3}
\end{gather}
\end{Theorem}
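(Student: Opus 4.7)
The plan is to prove the equivalence by treating the two directions separately.

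\textbf{Necessity.} Assume $\{\mathfrak{BI}_n\}_{n\in\mathbb{N}}$ is an $\mathbb{N}$-filtration. Since $X\in\mathfrak{BI}_{w_X}$ and $Y\in\mathfrak{BI}_{w_Y}$, property (N3) forces $YX\in\mathfrak{BI}_{w_X+w_Y}$. The defining relation (Proposition~\ref{prop:reductionBI}) rewrites $YX=-XY+Z+\kappa$, which is already the PBW expansion in the basis of Theorem~\ref{thm:basisBI}. Because $\mathfrak{BI}_{w_X+w_Y}$ is by definition the span of PBW monomials of weight at most $w_X+w_Y$ and those monomials are linearly independent, each of the nonzero terms $-XY$, $Z$, $\kappa$ on the right must have weight $\le w_X+w_Y$; reading off the weights of $Z$ and $\kappa$ yields~(\ref{ineq1}). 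The symmetric arguments applied to $ZY=-YZ+X+\lambda$ and $ZX=-XZ+Y+\mu$ deliver~(\ref{ineq2}) and~(\ref{ineq3}).

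\textbf{Sufficiency.} Assume~(\ref{ineq1})--(\ref{ineq3}). Properties (N1) and (N2) are immediate from the definition of $\mathfrak{BI}_n$ via the PBW basis. For~(N3), by bilinearity it suffices to show $uv\in\mathfrak{BI}_{m_u+m_v}$ for any two PBW basis monomials $u,v$ of weights $m_u,m_v$. For a word $w$ in the alphabet $\{X,Y,Z,\kappa,\lambda,\mu\}$ let $N(w)$ denote the sum of its letter-weights; then $N(uv)=m_u+m_v$, and the task reduces to showing that the unique PBW expansion of any such word $w$ is a linear combination of PBW basis monomials each of weight $\le N(w)$.

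I would verify this by running the diamond-lemma reduction underlying Theorem~\ref{thm:basisBI} and checking that $N$ is non-increasing at every step. Commuting a central generator $\kappa,\lambda,\mu$ past another letter trivially preserves $N$, so only the noncommutative rewritings
\begin{gather*}
YX\longmapsto -XY+Z+\kappa,\qquad ZY\longmapsto -YZ+X+\lambda,\qquad ZX\longmapsto -XZ+Y+\mu
\end{gather*}
need inspection. Each produces a three-term sum: the reordered summand has the same $N$-weight as the input pair, while hypotheses~(\ref{ineq1})--(\ref{ineq3})—equivalently the six inequalities $w_Z,w_\kappa\le w_X+w_Y$, $w_X,w_\lambda\le w_Y+w_Z$, $w_Y,w_\mu\le w_Z+w_X$—are exactly what is needed to bound the remaining two summands by the weight of the substituted pair. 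Termination of the reduction is supplied by Theorem~\ref{thm:basisBI}, so every PBW basis monomial in the final expansion of $w$ has weight $\le N(w)$, which proves~(N3).

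The main obstacle, such as it is, lies in matching up the six overflow summands produced by the three anticommutator relations with exactly the six inequalities of the theorem; once that bookkeeping is in place, both directions collapse onto inspection of the same PBW expansion and no deeper combinatorial input is needed.
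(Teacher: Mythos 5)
Your proof is correct and follows essentially the same route as the paper: necessity by applying (N3) to the relation $YX=-XY+Z+\kappa$ (the paper phrases it as $\{X,Y\}=Z+\kappa$) and reading off weights against the PBW basis, and sufficiency by observing that every rewriting relation of Proposition~\ref{prop:reductionBI} is weight-non-increasing, which is exactly the paper's monoid-homomorphism argument in different words. No gaps to report.
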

\begin{proof} ($\Rightarrow$) By the construction of $\{\mathfrak{BI}_{n}\}_{n\in {\mathbb N}}$ and Theorem \ref{thm:basisBI} the element
\begin{gather*}
Z+\kappa \notin \mathfrak{BI}_{\max\{w_Z, w_\kappa\}-1}.
\end{gather*}
On the other hand, by (N3) we have $\{X,Y\}\in \mathfrak{BI}_{w_X+w_Y}$. The equation (\ref{e:kappa}) implies
\begin{gather*}
Z+\kappa=\{X,Y\}.
\end{gather*}
By the above comments we see that $\mathfrak{BI}_{w_X+w_Y}$ contains $Z+\kappa$ which is not in $\mathfrak{BI}_{\max\{w_Z, w_\kappa\}-1}$. Combined with (N2) the inequality~(\ref{ineq1}) follows. The inequalities (\ref{ineq2}) and (\ref{ineq3}) follow by similar arguments.

($\Leftarrow$) Condition (N1) is immediate from Theorem \ref{thm:basisBI}. Condition (N2) is immediate from the construction of $\{\mathfrak{BI}_n\}_{n\in {\mathbb N}}$. Set $S=\{X,Y,Z,\kappa,\lambda,\mu\}$. For all $n\in {\mathbb N}$, let $I_n$ denote the set of all $(i,j,k,r,s,t)\in {\mathbb N}^6$ with $w_X i+w_Y j+w_Z k+w_\kappa r+w_\lambda s+w_\mu t\leq n$. Let $M$ denote the free monoid with the alphabet set $S$. There exists a unique monoid homomorphism $\tilde w\colon M\to {\mathbb N}$ such that
\begin{gather*}
\tilde w(u)=w_u \qquad \text{for all $u\in S$}.
\end{gather*}
By (\ref{ineq1})--(\ref{ineq3}), for each relation of Proposition \ref{prop:reductionBI}, the value of $\tilde w$ on the monomial in the left-hand side is greater than or equal to those in the right-hand side. Thus, for all $m,n\in {\mathbb N}$ and
for all $(i',j',k',r',s',t')\in I_m$ and $(i'',j'',k'',r'',s'',t'')\in I_n$ the product
\begin{gather*}
X^{i'} Y^{j'} Z^{k'} \kappa^{r'} \lambda^{s'} \mu^{t'}
\cdot
X^{i''} Y^{j''} Z^{k''} \kappa^{r''} \lambda^{s''} \mu^{t''}
\end{gather*}
is equal to an ${\mathbb F}$-linear combination of $X^i Y^j Z^k \kappa^r \lambda^s \mu^t$ for all $(i,j,k,r,s,t)\in I_{m+n}$. In other words (N3) holds. The theorem follows.
\end{proof}

\section[The homomorphism $\zeta\colon \Re\to \mathfrak{BI}$]{The homomorphism $\boldsymbol{\zeta\colon \Re\to \mathfrak{BI}}$}\label{s:zeta}

According to \cite[Section 2]{R&BI2015} there exists an ${\mathbb F}$-algebra homomorphism $\zeta\colon \Re\to \mathfrak{BI}$ and the images of $A$, $B$, $C$, $\alpha$, $\beta$, $\gamma$, $\delta$ under $\zeta$ are as follows:
\begin{Theorem}[\cite{R&BI2015}]
\label{thm:hom}
There exists a unique ${\mathbb F}$-algebra homomorphism $\zeta\colon \Re\to \mathfrak{BI}$ that sends
\allowdisplaybreaks
\begin{gather*}
A \mapsto \frac{(2X-3)(2X+1)}{16},\qquad
B \mapsto \frac{(2 Y-3)(2 Y+1)}{16}, \qquad
C \mapsto \frac{(2 Z-3)(2 Z+1)}{16},
\\
\alpha \mapsto
\frac{(2\iota-\kappa-\mu-3)(\kappa-\mu)}{64},
\qquad
\beta \mapsto
\frac{(2\iota-\lambda-\kappa-3)(\lambda-\kappa)}{64},
\\
\gamma \mapsto
\frac{(2\iota-\mu-\lambda-3)(\mu-\lambda)}{64},
\qquad
\delta \mapsto
\frac{\iota^2-2\iota-\kappa-\lambda-\mu}{4}-\frac{9}{16}.
\end{gather*}
\end{Theorem}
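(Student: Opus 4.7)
The uniqueness of $\zeta$ is immediate from Lemma \ref{lem:alp&bet&gam&del}(i): since $\Re$ is generated by $A$, $B$, $C$, any ${\mathbb F}$-algebra homomorphism out of $\Re$ is determined by its values on these three elements.

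For existence, the plan is to verify that the proposed images $\bar A = (2X-3)(2X+1)/16$, $\bar B = (2Y-3)(2Y+1)/16$, $\bar C = (2Z-3)(2Z+1)/16$ in $\mathfrak{BI}$ satisfy the defining relations of $\Re$ from Definition \ref{defn:R}. First I would compute $[\bar A, \bar B]$ by reducing products of $X$ and $Y$ to normal form via the Bannai--Ito relation $YX = -XY + Z + \kappa$ (Proposition \ref{prop:reductionBI}); I would then carry out the analogous computations for $[\bar B, \bar C]$ and $[\bar C, \bar A]$, and check that all three commutators coincide. Half of this common value then serves as the image $\bar D$. Next I would verify that each of $[\bar A, \bar D] + \bar A \bar C - \bar B \bar A$ and its two cyclic analogues lies in the centre of $\mathfrak{BI}$, either by confirming it commutes with each of $X$, $Y$, $Z$, or equivalently by expressing it in the PBW basis of Theorem \ref{thm:basisBI} as a polynomial in $\kappa$, $\lambda$, $\mu$ alone.

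The $D_6$-symmetries of $\Re$ (Proposition \ref{prop:D6&R}) and of $\mathfrak{BI}$ (Proposition \ref{prop:D6&BI}) would let me reduce the three commutator identities, and the three centrality statements, to a single instance each by invoking the cyclic automorphism $\tau$. Even with this reduction, the main obstacle is the sheer volume of calculation: $\bar A$ and $\bar B$ are quadratic in the non-commuting generators, so $[\bar A, \bar B]$ expands into a cubic expression that requires repeated use of the anticommutator relations (\ref{e:kappa})--(\ref{e:mu}) to bring to normal form, and checking centrality of the cubic element $[\bar A, \bar D] + \bar A \bar C - \bar B \bar A$ is by far the bulkiest computation in the proof.

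Given existence and uniqueness, the formulas for $\zeta(\alpha)$, $\zeta(\beta)$, $\zeta(\gamma)$, $\zeta(\delta)$ follow by applying $\zeta$ to the defining expressions (\ref{r:alpha})--(\ref{r:delta}). For $\delta$, simplifying $\zeta(A)+\zeta(B)+\zeta(C)$ uses the identity $X^2 + Y^2 + Z^2 = \iota^2 - \iota - \kappa - \lambda - \mu$, obtained by expanding $(X+Y+Z)^2$ and applying (\ref{e:kappa})--(\ref{e:mu}). For $\alpha$, I would substitute the expressions for $\zeta(A)$, $\zeta(B)$, $\zeta(C)$, $\zeta(D)$ into (\ref{r:alpha}) and simplify to the claimed form; the images of $\beta$ and $\gamma$ then follow either analogously, or more compactly by applying the cyclic automorphism $\tau$ and observing that $\zeta$ intertwines the $D_6$-actions, a compatibility that can be checked directly on the generators $A$, $B$, $C$.
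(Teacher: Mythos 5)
The paper does not prove Theorem~\ref{thm:hom} at all: it is imported verbatim from \cite{R&BI2015}, so your proposal has to stand on its own. Its skeleton is reasonable --- uniqueness from Lemma~\ref{lem:alp&bet&gam&del}(i), existence by checking the defining relations of Definition~\ref{defn:R} on the candidate images, the images of $\alpha$, $\beta$, $\gamma$, $\delta$ then read off from (\ref{r:alpha})--(\ref{r:delta}); your identity $X^2+Y^2+Z^2=\iota^2-\iota-\kappa-\lambda-\mu$ and the resulting formula for $\delta$ are correct, and the common value $\tfrac12[\bar A,\bar B]$ you would take as $\bar D$ agrees with Proposition~\ref{prop:Dzeta}. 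However, there is a genuine error in the step that carries most of the weight: you propose to verify that $\bar\alpha=[\bar A,\bar D]+\bar A\bar C-\bar B\bar A$ \emph{lies in the centre of $\mathfrak{BI}$}, ``by confirming it commutes with each of $X$, $Y$, $Z$, or equivalently by expressing it in the PBW basis as a polynomial in $\kappa$, $\lambda$, $\mu$ alone.'' That is false, and the check as designed would fail. By the very statement being proved, $\bar\alpha=\tfrac{1}{64}(2\iota-\kappa-\mu-3)(\kappa-\mu)$, which genuinely involves $\iota=X+Y+Z$; and $\iota$ is not central in $\mathfrak{BI}$, since $[\iota,X]=[Y,X]+[Z,X]=Y+Z+\kappa+\mu-2XY-2XZ\neq 0$, whence $[\bar\alpha,X]=\tfrac{1}{32}(\kappa-\mu)[\iota,X]\neq 0$ by Theorem~\ref{thm:basisBI}. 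In particular $\bar\alpha$ is not expressible in $\kappa,\lambda,\mu$ alone (compare Theorem~\ref{thm:basisBI2} and Corollary~\ref{cor:ind_iklm}). The confusion is between ``central in $\Re$'' and ``central in $\mathfrak{BI}$'': the defining relations of $\Re$ only require the images of $\alpha$, $\beta$, $\gamma$ to commute with $\bar A$, $\bar B$, $\bar C$ (hence with $\bar D=\tfrac12[\bar A,\bar B]$), i.e., to lie in the centralizer of the would-be image of $\Re$, not in the centre of $\mathfrak{BI}$.

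The repair is concrete but changes what you compute: reduce $\bar\alpha$ to normal form and show it equals a polynomial in $\iota$, $\kappa$, $\lambda$, $\mu$ (namely the displayed one), then show separately that $\iota$ commutes with $\bar A$, $\bar B$, $\bar C$ --- this is exactly the computation of Lemma~\ref{lem:iota}, whose proof uses only Lemma~\ref{lem:L}(i) and the relations of $\mathfrak{BI}$, so there is no circularity in invoking it before $\zeta$ exists. A smaller slip: in Propositions~\ref{prop:D6&R} and~\ref{prop:D6&BI} the maps $\sigma$, $\tau$ are \emph{anti}automorphisms, not automorphisms; your symmetry reduction of the three commutator relations to one still works, but via the automorphism $\tau^2$ (or by tracking the order reversal and the sign $\tau(D)=-D$), and this needs to be stated, especially since the intertwining property you appeal to is exactly Corollary~\ref{cor:D6&zeta}, which the paper deduces \emph{after} Theorem~\ref{thm:hom}.
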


We are now going to evaluate the image of $D$ under $\zeta$.

\begin{Lemma}\label{lem:L}\quad
\begin{enumerate}\itemsep=0pt
\item[$(i)$] The following equations hold in $\mathfrak{BI}$:
\begin{gather*}
\big[X^2,Y\big]=[X,Z],\qquad \big[Y^2,Z\big]=[Y,X],\qquad \big[Z^2,X\big]=[Z,Y],
\\
\big[Y^2,X\big]=[Y,Z], \qquad \big[Z^2,Y\big]=[Z,X],\qquad \big[X^2,Z\big]=[X,Y].
\end{gather*}

\item[$(ii)$] The following elements of $\mathfrak{BI}$ are equal:
\begin{gather*}
\{X,[Z,Y]\},\qquad\{Y,[X,Z]\},\qquad\{Z,[Y,X]\},\\
\big[X^2,Y^2\big],\qquad\big[Y^2,Z^2\big],\qquad\big[Z^2,X^2\big].
\end{gather*}
\end{enumerate}
\end{Lemma}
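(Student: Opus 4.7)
For part (i), the plan is to prove the identity $[X^2, Y] = [X, Z]$ by a direct calculation and then derive the other five from the $D_6$-symmetry of Proposition~\ref{prop:D6&BI}. Substituting the defining relation $XY + YX = Z + \kappa$, with $\kappa$ central, into both $X^2 Y = X(XY)$ and $YX^2 = (YX)X$, the two occurrences of $-XYX$ and the two central terms $\kappa X$ cancel in the difference, leaving $[X^2, Y] = XZ - ZX = [X, Z]$. Because each of $\sigma, \tau$ acts on $\mathfrak{BI}$ as an antiautomorphism, $\phi([a,b]) = -[\phi(a), \phi(b)]$, so the minus sign appears on both sides of a commutator identity and cancels; applying the cyclic element $\tau$ and the involution $\sigma$ therefore converts the first identity into the remaining five.

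For part (ii), I would show that each of the six displayed elements equals $\pm[X^2, Y^2]$ by the Leibniz rule. Applying Leibniz in the $Y^2$-slot and invoking part (i) gives
\begin{gather*}
[X^2, Y^2] = [X^2, Y]\, Y + Y\, [X^2, Y] = \{Y, [X, Z]\},
\end{gather*}
while applying Leibniz in the $X^2$-slot gives
\begin{gather*}
[X^2, Y^2] = X\, [X, Y^2] + [X, Y^2]\, X = -\{X, [Y, Z]\} = \{X, [Z, Y]\},
\end{gather*}
using $[X, Y^2] = -[Y^2, X] = -[Y, Z]$ from part (i). Thus $\{X, [Z, Y]\} = \{Y, [X, Z]\} = [X^2, Y^2]$. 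Applying $\tau$ to this string of equalities and tracking the signs produced by the antiautomorphism (anticommutators are preserved, while $[X^2, Y^2] \mapsto -[Y^2, Z^2]$) yields $\{Y, [X, Z]\} = \{Z, [Y, X]\} = [Y^2, Z^2]$, and a second application brings the missing $[Z^2, X^2]$ into the chain, so all six elements coincide.

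The only delicate point is the sign introduced when an antiautomorphism passes through a commutator; beyond that, everything reduces to the defining anticommutator relations and the Leibniz identity. I would therefore spell out the sign bookkeeping carefully in one place and then invoke the $D_6$-action of Proposition~\ref{prop:D6&BI} wherever it shortens the argument.
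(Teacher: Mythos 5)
Your proposal is correct. Part (i) is essentially the paper's argument: the paper writes $[X,Z]=[X,\{X,Y\}]=\big[X^2,Y\big]$ using centrality of $\kappa$, which is the same cancellation you perform by substituting $XY+YX=Z+\kappa$, and both proofs then invoke the $D_6$-action of Proposition~\ref{prop:D6&BI}. In part (ii) your route to the two base identities is genuinely different and arguably cleaner: you make (ii) a formal consequence of (i) via the Leibniz rule, expanding $\big[X^2,Y^2\big]$ once in the $Y^2$-slot to get $\{Y,[X,Z]\}$ and once in the $X^2$-slot to get $\{X,[Z,Y]\}$, whereas the paper goes back to the defining relation: it proves $\{X,[Z,Y]\}=\{Y,[X,Z]\}$ from the identity $\{X,[Z,Y]\}-\{Y,[X,Z]\}=[Z,\{Y,X\}]$ (zero because $Z$ commutes with $\{Y,X\}=Z+\kappa$), and proves $\{X,[Z,Y]\}=\big[X^2,Y^2\big]$ by expanding $X^2Y^2$ with the relation twice and applying the order-reversing antiautomorphism $\tau^3$ to obtain $Y^2X^2$ before subtracting. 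Your method buys a shorter, more conceptual derivation that reuses (i); the paper's buys independence from the Leibniz bookkeeping and exhibits the useful $\tau^3$ trick. One small caveat: your parenthetical ``anticommutators are preserved'' is slightly loose, since the inner commutator in $\{X,[Z,Y]\}$ also reverses under an antiautomorphism (e.g., $\tau$ sends $\{X,[Z,Y]\}$ to $\{Y,[Z,X]\}=-\{Y,[X,Z]\}$); the signs still cancel across each equality, and the conclusions you state, $\{Y,[X,Z]\}=\{Z,[Y,X]\}=\big[Y^2,Z^2\big]$ and then $\big[Z^2,X^2\big]$ after a second application, are correct, but the final write-up should display this sign computation explicitly, as you indicate you would.
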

\begin{proof}
(i) Since $\kappa$ is central in $\mathfrak{BI}$ and by (\ref{e:kappa}) it follows that
\begin{gather*}
[X,Z]=[X,\{X,Y\}].
\end{gather*}
Observe that $[X,\{X,Y\}]=\big[X^2,Y\big]$. Therefore
\begin{gather}\label{e:[X2,Y]}
\big[X^2,Y\big]=[X,Z].
\end{gather}
Applying Proposition \ref{prop:D6&BI} to (\ref{e:[X2,Y]}) yields the remaining equations in (i).

(ii) By Proposition \ref{prop:D6&BI}(ii) it suffices to show that
\begin{gather}
\{X,[Z,Y]\}=\{Y,[X,Z]\},\label{e:L00=L01}\\
\{X,[Z,Y]\}=\big[X^2,Y^2\big].\label{e:L00=L10}
\end{gather}
With trivial cancellations we obtain
\begin{gather}\label{e:[Z,YX]}
\{X,[Z,Y]\}-\{Y,[X,Z]\}=[Z,\{Y,X\}].
\end{gather}
Since $\kappa$ is central in $\mathfrak{BI}$ and by (\ref{e:kappa}) the element $Z$ commutes with $\{Y,X\}$. Hence the right-hand side of (\ref{e:[Z,YX]}) is zero. Therefore (\ref{e:L00=L01}) follows.
Using (\ref{e:kappa}) twice we find that
\begin{gather}\label{e:X2Y2}
X^2 Y^2 =XY^2 X+XZY-XYZ.
\end{gather}
By Proposition \ref{prop:D6&BI}(ii), $\tau^3$ is an ${\mathbb F}$-algebra antiautomorphism of $\mathfrak{BI}$ that fixes $X$, $Y$, $Z$.
Thus, applying $\tau^3$ to (\ref{e:X2Y2}) yields that
\begin{gather}\label{e:Y2X2}
Y^2X^2 =XY^2X+YZX-ZYX.
\end{gather}
Subtracting (\ref{e:Y2X2}) from (\ref{e:X2Y2}) yields (\ref{e:L00=L10}). Hence (ii) follows.
\end{proof}

For convenience we let $L$ denote the common element of $\mathfrak{BI}$ from Lemma \ref{lem:L}(ii).

\begin{Proposition}\label{prop:Dzeta}
The image of $D$ under $\zeta$ is equal to
\begin{gather*}
\frac{[X,Y]+[Y,Z]+[Z,X]+L}{32}.
\end{gather*}
\end{Proposition}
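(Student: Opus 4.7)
The plan is to use the defining relation $2D=[A,B]$ of the Racah algebra, applied through the homomorphism $\zeta$. Since $\zeta$ is an ${\mathbb F}$-algebra homomorphism, we have $2\zeta(D)=[\zeta(A),\zeta(B)]$, so everything reduces to computing a single commutator in $\mathfrak{BI}$ from the explicit formulas in Theorem~\ref{thm:hom}.

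First I would rewrite $\zeta(A)=\tfrac{1}{4}(X^{2}-X)-\tfrac{3}{16}$ and $\zeta(B)=\tfrac{1}{4}(Y^{2}-Y)-\tfrac{3}{16}$. The scalar parts drop out of the commutator, so
\begin{gather*}
[\zeta(A),\zeta(B)]=\frac{1}{16}\bigl[X^{2}-X,\;Y^{2}-Y\bigr]=\frac{1}{16}\bigl(\bigl[X^{2},Y^{2}\bigr]-\bigl[X^{2},Y\bigr]-\bigl[X,Y^{2}\bigr]+[X,Y]\bigr).
\end{gather*}

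Next I would apply the two parts of Lemma~\ref{lem:L}. By Lemma~\ref{lem:L}(ii), $[X^{2},Y^{2}]=L$. By Lemma~\ref{lem:L}(i), $[X^{2},Y]=[X,Z]=-[Z,X]$ and $[X,Y^{2}]=-[Y^{2},X]=-[Y,Z]$. Substituting and collecting signs gives
\begin{gather*}
[\zeta(A),\zeta(B)]=\frac{1}{16}\bigl(L+[X,Y]+[Y,Z]+[Z,X]\bigr),
\end{gather*}
so dividing by $2$ yields the desired expression for $\zeta(D)$.

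This is essentially a bookkeeping proof rather than a conceptual one; the only place to slip is tracking the signs when converting $[X,Y^{2}]$ and $[X^{2},Y]$ into the commutators that appear in the answer. The real work was done in Lemma~\ref{lem:L}, which encodes both the reduction of the fourth-degree commutator $[X^{2},Y^{2}]$ to the symmetric element $L$ and the reduction of the mixed commutators $[X^{2},Y]$, $[X,Y^{2}]$ to first-order commutators; with those in hand, no choice of $D$ in the Racah presentation is needed beyond $2D=[A,B]$.
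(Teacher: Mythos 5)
Your proof is correct and follows essentially the same route as the paper: apply $\zeta$ to the relation $[A,B]=2D$, expand the commutator of $\zeta(A)=\tfrac14\big(X^2-X\big)-\tfrac{3}{16}$ and $\zeta(B)=\tfrac14\big(Y^2-Y\big)-\tfrac{3}{16}$, and reduce the terms via Lemma~\ref{lem:L}(i) and~(ii). The sign bookkeeping ($[X^2,Y]=[X,Z]=-[Z,X]$ and $[X,Y^2]=-[Y^2,X]=-[Y,Z]$) matches the paper's computation, so nothing further is needed.
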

\begin{proof}
By (\ref{r:D}) we have $2D^\zeta=\big[A^\zeta,B^\zeta\big]$.
A direct calculation yields that $[A^\zeta, B^\zeta]$ is equal to
\begin{gather*}
\frac{\big[X^2,Y^2\big]+[X,Y]+\big[Y^2,X\big]+\big[Y,X^2\big]}{16}.
\end{gather*}
By Lemma \ref{lem:L}(i), $\big[Y^2,X\big]=[Y,Z]$ and $\big[Y,X^2\big]=[Z,X]$. By Lemma \ref{lem:L}(ii), $\big[X^2,Y^2\big]=L$. The proposition follows.
\end{proof}

\begin{Corollary}\label{cor:D6&zeta}
For each $g\in D_6$ the following diagram commutes:

\begin{table}[h]
\centering
\begin{tikzpicture}
\matrix(m)[matrix of math nodes,
row sep=2.6em, column sep=2.8em,
text height=1.5ex, text depth=0.25ex]
{
\Re
&\mathfrak{BI}\\
\Re
&\mathfrak{BI}.\\
};
\path[->,font=\scriptsize,>=angle 90]
(m-1-1) edge node[auto] {$\zeta$} (m-1-2)
(m-1-1) edge node[left] {$g$} (m-2-1)
(m-2-1) edge node[auto] {$\zeta$} (m-2-2)
(m-1-2) edge node[auto] {$g$} (m-2-2);
\end{tikzpicture}
\end{table}
\end{Corollary}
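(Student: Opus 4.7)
The plan is to reduce to a finite verification on generators. Since the dihedral group $D_6$ is generated by $\sigma$ and $\tau$ (see (\ref{D6})), if the diagram commutes for $g=\sigma$ and for $g=\tau$, then for any word $g=g_1g_2\cdots g_n$ with $g_i\in\{\sigma,\tau\}$ we obtain
\begin{gather*}
\zeta\circ g=\zeta\circ g_1\circ\cdots\circ g_n=g_1\circ\zeta\circ g_2\circ\cdots\circ g_n=\cdots=g_1\circ\cdots\circ g_n\circ\zeta=g\circ\zeta,
\end{gather*}
so it suffices to treat these two cases.

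Fix $g\in\{\sigma,\tau\}$. Since $g$ acts on both $\Re$ and $\mathfrak{BI}$ as an ${\mathbb F}$-algebra antiautomorphism (Propositions \ref{prop:D6&R} and \ref{prop:D6&BI}) and $\zeta$ is an ${\mathbb F}$-algebra homomorphism, each of $\zeta\circ g$ and $g\circ\zeta$ is an ${\mathbb F}$-algebra antihomomorphism $\Re\to\mathfrak{BI}$. By Lemma~\ref{lem:alp&bet&gam&del}(i), $\Re$ is generated by $A,B,C$, so it suffices to check the equality $\zeta(g(u))=g(\zeta(u))$ for $u\in\{A,B,C\}$.

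Comparing the tables in Propositions~\ref{prop:D6&R}(i),(ii) and \ref{prop:D6&BI}(i),(ii) shows the crucial alignment: under the bijection $A\leftrightarrow X$, $B\leftrightarrow Y$, $C\leftrightarrow Z$, the permutation induced by $g$ on $\{A,B,C\}$ matches the permutation induced by $g$ on $\{X,Y,Z\}$. Writing $u\leftrightarrow x$ for this correspondence, Theorem~\ref{thm:hom} gives $\zeta(u)=\frac{(2x-3)(2x+1)}{16}$, and the same theorem together with the table reads off
\begin{gather*}
\zeta(g(u))=\frac{(2g(x)-3)(2g(x)+1)}{16}.
\end{gather*}
On the other side, since $g$ is an antiautomorphism fixing ${\mathbb F}$ and the two factors are polynomials in the single element $g(x)$ (which commute with each other),
\begin{gather*}
g(\zeta(u))=\frac{g(2x+1)\cdot g(2x-3)}{16}=\frac{(2g(x)+1)(2g(x)-3)}{16}=\frac{(2g(x)-3)(2g(x)+1)}{16}.
\end{gather*}
The two expressions coincide, which completes the verification.

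I do not foresee any genuine obstacle; the work is purely bookkeeping against the two tables. The one conceptual point worth flagging in the write-up is the observation that both $\zeta\circ g$ and $g\circ\zeta$ are antihomomorphisms (for $g\in\{\sigma,\tau\}$), which is what legitimizes the reduction to the three generators $A,B,C$ and which, together with the alignment of the two tables under $A\leftrightarrow X,\ B\leftrightarrow Y,\ C\leftrightarrow Z$, makes the check essentially immediate.
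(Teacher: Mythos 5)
Your proposal is correct and follows essentially the same route as the paper, which simply declares the verification routine using Propositions~\ref{prop:D6&R}, \ref{prop:D6&BI} and Theorem~\ref{thm:hom}; your write-up is the natural way to carry out that routine check. The reductions you make explicit (to the generators $\sigma$, $\tau$ of $D_6$, and to the generators $A$, $B$, $C$ of $\Re$ via the antihomomorphism observation) are exactly the bookkeeping the paper leaves to the reader.
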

\begin{proof}
It is routine to verify the corollary by using Propositions \ref{prop:D6&R}, \ref{prop:D6&BI} and Theorem \ref{thm:hom}.
\end{proof}

We end this section with a comment:
Recall from \cite{BI&NW2016,Huang-BImodules} that a universal analogue of the additive DAHA (double affine Hecke algebra) of type $\big(C_1^\vee,C_1\big)$, denoted by $\mathfrak H$ here, is an ${\mathbb F}$-algebra generated by $t_0$, $t_1$, $t_0^\vee$, $t_1^\vee$ and the relations assert that
\begin{gather*}
t_0+t_1+t_0^\vee+t_1^\vee=-1
\end{gather*}
and each of $t_0^2$, $t_1^2$, $t_0^{\vee 2}$, $t_1^{\vee 2}$ is central in $\mathfrak H$.
By \cite[Proposition~2]{BI&NW2016} there exists an ${\mathbb F}$-algebra isomorphism $\natural\colon \mathfrak{BI}\to \mathfrak H$ that sends
\begin{gather*}
X \mapsto t_0+t_1+\frac{1}{2},
\qquad
Y \mapsto t_0+t_0^\vee+\frac{1}{2},
\qquad
Z \mapsto t_0+t_1^\vee+\frac{1}{2},
\qquad
\iota \mapsto 2t_0+\frac{1}{2},\\
\kappa \mapsto t_0^2-t_1^2-t_0^{\vee 2}+t_1^{\vee 2},
\qquad
\lambda \mapsto t_0^2- t_0^{\vee 2}-t_1^{\vee 2}+t_1^2,
\qquad
\mu \mapsto t_0^2-t_1^{\vee 2}-t_1^2+t_0^{\vee 2}.
\end{gather*}
The universal Askey--Wilson algebra \cite{uaw2011} and the universal DAHA of type $\big(C_1^\vee,C_1\big)$ \cite{DAHA2013} are the $q$-analogues of $\Re$ and $\mathfrak H$, respectively. Therefore
\cite[Theorem~4.1]{DAHA2013} is a $q$-analogue of the homomorphism $\natural\circ \zeta\colon \Re\to \mathfrak H$. Note that $\natural\circ \zeta$ sends
\allowdisplaybreaks
\begin{gather*}
A \mapsto \frac{\big(t_1^\vee+t_0^\vee\big)\big(t_1^\vee+t_0^\vee+2\big)}{4},
\qquad
B \mapsto \frac{\big(t_1+t_1^\vee\big)\big(t_1+t_1^\vee+2\big)}{4},
\\
C \mapsto \frac{\big(t_0^\vee+t_1\big)\big(t_0^\vee+t_1+2\big)}{4},
\\
\alpha \mapsto \frac{\big(t_1^{\vee 2}-t_0^{\vee 2}\big)\big(t_1^2-t_0^2+2t_0-1\big)}{16},
\qquad
\beta \mapsto \frac{\big(t_1^2-t_1^{\vee 2}\big)\big(t_0^{\vee 2}-t_0^2+2t_0-1\big)}{16},
\\
\gamma \mapsto \frac{\big(t_0^{\vee 2}-t_1^2\big)\big(t_1^{\vee 2}-t_0^2+2t_0-1\big)}{16},
\qquad
\delta \mapsto \frac{t_0^2+t_1^2+t_0^{\vee 2}+t_1^{\vee 2}}{4}-\frac{t_0}{2}-\frac{3}{4}.
\end{gather*}

\section[The injectivity of $\zeta$]{The injectivity of $\boldsymbol{\zeta}$}\label{s:injective}

Throughout this section, we let $\{\mathfrak{BI}_n\}_{n\in {\mathbb N}}$ denote the ${\mathbb F}$-subspaces of $\mathfrak{BI}$ associated with
\begin{gather}\label{446899}
(w_X,w_Y,w_Z,w_\kappa,w_\lambda,w_\mu)=(4,4,6,8,9,9).
\end{gather}
Since the number sequence (\ref{446899}) satisfies (\ref{ineq1})--(\ref{ineq3}), it follows from Theorem \ref{thm:filtrationBI} that $\{\mathfrak{BI}_n\}_{n\in {\mathbb N}}$ is an ${\mathbb N}$-filtration of $\mathfrak{BI}$.

\begin{Lemma}\label{lem:BIw1}\quad
\begin{enumerate}\itemsep=0pt
\item[$(i)$] For any even integer $n\geq 0$ the following equations hold:
\begin{gather*}
Y^{n} X =X Y^{n} \pmod{\mathfrak{BI}_{4n+3}},\\
X^{n} Y = Y X^{n} \pmod{\mathfrak{BI}_{4n+3}},\\
Z^{n} Y = Y Z^{n} \pmod{\mathfrak{BI}_{6n+3}},\\
Y^{n} Z = Z Y^{n} \pmod{\mathfrak{BI}_{4n+5}},\\
X^{n} Z = Z X^{n} \pmod{\mathfrak{BI}_{4n+5}},\\
Z^{n} X = X Z^{n} \pmod{\mathfrak{BI}_{6n+3}}.
\end{gather*}

\item[$(ii)$] For any odd integer $n\geq 1$ the following equations hold:
\begin{gather*}
Y^{n} X = -X Y^{n}+\kappa Y^{n-1} \pmod{\mathfrak{BI}_{4n+3}},\\
X^{n} Y = -Y X^{n}+\kappa X^{n-1} \pmod{\mathfrak{BI}_{4n+3}},\\
Y^{n} Z = -Z Y^{n} \pmod{\mathfrak{BI}_{4n+5}},\\
Z^{n} Y = -Y Z^{n}\pmod{\mathfrak{BI}_{6n+3}},\\
X^{n} Z = -Z X^{n} \pmod{\mathfrak{BI}_{4n+5}},\\
Z^{n} X = -X Z^{n} \pmod{\mathfrak{BI}_{6n+3}}.
\end{gather*}
\end{enumerate}
\end{Lemma}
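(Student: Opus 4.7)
Proof plan: The six congruences are all established by a common induction on $n$, each statement proved independently. I describe the pattern for the first pair, $Y^nX$ vs.\ $XY^n$, since the remaining five are essentially identical after relabeling.

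For $n=0$ both sides coincide trivially. Granted the relation for a given $n$, I obtain the relation for $n+1$ by left-multiplying by $Y$ and applying the commutation $YX=-XY+Z+\kappa$ from Proposition~\ref{prop:reductionBI} to the resulting $YX$ subword. If $n$ is even, writing $Y^nX=XY^n+r$ with $r\in \mathfrak{BI}_{4n+3}$ gives
\[
Y^{n+1}X=(YX)Y^n+Yr=-XY^{n+1}+ZY^n+\kappa Y^n+Yr,
\]
and since $ZY^n\in\mathfrak{BI}_{6+4n}$ and $Yr\in\mathfrak{BI}_{4+(4n+3)}$ both lie in $\mathfrak{BI}_{4(n+1)+3}$ by axiom (N3) of the filtration, this yields the congruence for $n+1$ odd. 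If $n$ is odd, writing $Y^nX=-XY^n+\kappa Y^{n-1}+r$ gives
\[
Y^{n+1}X=-(YX)Y^n+\kappa Y^n+Yr=XY^{n+1}-ZY^n-\kappa Y^n+\kappa Y^n+Yr,
\]
in which the two $\kappa Y^n$ summands cancel, producing the congruence for $n+1$ even.

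The remaining five identities follow the same template, with $YX$ replaced by $XY$, $YZ$, $ZY$, $XZ$, or $ZX$ and the corresponding basic commutator from Proposition~\ref{prop:reductionBI}. The filtration bound changes from $4n+3$ to $4n+5$ when the moving generator is $Y$ or $X$ but the iterated one is $Z$ (reflecting $w_Z=6$ in place of $w_X=w_Y=4$), and to $6n+3$ when $Z$ is the iterated generator. In the $(Y,Z)$ and $(Z,X)$ cases the central elements $\lambda$ and $\mu$ (both of weight $9$) appear inside the induction step, but since $9<w_Z+w_X=10$ they are absorbed into the error term rather than persisting as leading summands — this is precisely why the odd-$n$ formulas for those four identities carry no central-element correction, in contrast to the $(X,Y)$ pair, where $w_\kappa=w_X+w_Y=8$ forces $\kappa Y^{n-1}$ to survive as a main term.

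The only substantive matter is the weight bookkeeping. The weights $(w_X,w_Y,w_Z,w_\kappa,w_\lambda,w_\mu)=(4,4,6,8,9,9)$ are calibrated so that every error term arising in the inductive step lands at or below the advertised filtration level; the arithmetic is tight, with no slack, but routine in each of the six cases.
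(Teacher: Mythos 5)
Your proof is correct and follows essentially the same route as the paper, which simply declares the six congruences "routine inductions using the relations defining $\kappa$, $\lambda$, $\mu$"; your induction on $n$ via the rewriting rules of Proposition~\ref{prop:reductionBI}, with the weight bookkeeping for $(4,4,6,8,9,9)$ and the observation that $\kappa$ (weight $8=w_X+w_Y$) survives while $Z$, $X$, $Y$, $\lambda$, $\mu$ are absorbed, is exactly that argument made explicit. (Only a verbal slip: the bound $4n+5$ occurs when $Z$ is the \emph{moving} generator and $X$ or $Y$ is iterated, not the other way around, as your own parenthetical and computations already indicate.)
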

\begin{proof}
All equations are established by routine inductions and using (\ref{e:kappa})--(\ref{e:mu}).
\end{proof}

\begin{Lemma}\label{lem:ABCDnzeta}\quad
\begin{enumerate}\itemsep=0pt
\item[$(i)$] For any integer $n\geq 0$ the following equations hold:
\begin{gather*}
\big(A^\zeta\big)^n = \left(\frac{X}{2}\right)^{2n}\pmod{\mathfrak{BI}_{8n-1}},\\
\big(B^\zeta\big)^n =\left(\frac{Y}{2}\right)^{2n}\pmod{\mathfrak{BI}_{8n-1}},\\
\big(C^\zeta\big)^n =\left(\frac{Z}{2}\right)^{2n}\pmod{\mathfrak{BI}_{12n-1}},\\
\big(\alpha^\zeta\big)^n =\left(\frac{\mu}{8}\right)^{2n} \pmod{\mathfrak{BI}_{18n-1}},\\
\big(\beta^\zeta\big)^n = (-1)^n\left(\frac{\lambda}{8}\right)^{2n} \pmod{\mathfrak{BI}_{18n-1}}.
\end{gather*}

\item[$(ii)$] For any even integer $n\geq 0$ the following equation holds:
\begin{gather*}
\big(D^\zeta\big)^n=\frac{1}{16^n}\sum_{i=0}^{\frac{n}{2}}(-4)^i\binom{\frac{n}{2}}{i}X^{2i} Y^{2i} \kappa^{n-2i}Z^{n}
\pmod{\mathfrak{BI}_{14n-1}}.
\end{gather*}

\item[$(iii)$] For any odd integer $n\geq 1$ the following equation holds:
\begin{gather*}
\big(D^\zeta\big)^n=\frac{1}{16^n}\sum_{i=0}^{\frac{n-1}{2}}(-4)^i
\binom{\frac{n-1}{2}}{i}\big(X^{2i} Y^{2i}\kappa^{n-2i}-2 X^{2i+1} Y^{2i+1}\kappa^{n-2i-1}\big)Z^n \\
\pmod{\mathfrak{BI}_{14n-1}}.
\end{gather*}
\end{enumerate}
\end{Lemma}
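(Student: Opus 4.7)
The plan is to identify the top-weight part of each image $A^\zeta,B^\zeta,C^\zeta,\alpha^\zeta,\beta^\zeta,D^\zeta$ with respect to the filtration $\{\mathfrak{BI}_n\}$ of weights $(4,4,6,8,9,9)$, and then expand $n$-th powers using the commutation rules, which modulo lower weight become especially clean. The defining relations of $\mathfrak{BI}$ reduce in this limit to $\{X,Y\}\equiv\kappa$, $\{Y,Z\}\equiv 0$, $\{Z,X\}\equiv 0$ with $\kappa,\lambda,\mu$ central; these are exactly the simplifications captured by Lemma \ref{lem:BIw1}.

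Part (i) is the easy warm-up. Each of $A^\zeta,B^\zeta,C^\zeta$ is a polynomial in a single generator $X$, $Y$, $Z$, so its $n$-th power is a scalar polynomial with leading term $(X/2)^{2n}$, $(Y/2)^{2n}$, $(Z/2)^{2n}$ and lower-degree remainder of weight at most $8n-4$, $8n-4$, $12n-6$ respectively. For $\alpha^\zeta$ and $\beta^\zeta$, I would expand the formulas of Theorem \ref{thm:hom} and observe that the only weight-$18$ contributions are $\mu^2/64$ and $-\lambda^2/64$, because $\iota\in\mathfrak{BI}_6$ keeps every $\iota$-summand of weight at most $15$. Since $\iota,\kappa,\lambda,\mu$ pairwise commute, a single binomial expansion delivers the congruences modulo $\mathfrak{BI}_{18n-1}$.

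For (ii) and (iii), I would first put $D^\zeta$ into PBW form. By Proposition \ref{prop:Dzeta} it is $(1/32)([X,Y]+[Y,Z]+[Z,X]+L)$; the first three commutators lie in $\mathfrak{BI}_{10}$, while a PBW reduction of $L=[X^2,Y^2]=\{X,[Z,Y]\}$ via Proposition \ref{prop:reductionBI} has leading weight-$14$ part $-4XYZ+2\kappa Z$. Hence
\begin{gather*}
D^\zeta \equiv \frac{(-2XY+\kappa)Z}{16}\pmod{\mathfrak{BI}_{13}}.
\end{gather*}
A routine induction using axiom (N3) of a filtration then upgrades this to
\begin{gather*}
(D^\zeta)^n \equiv \frac{1}{16^n}\bigl((-2XY+\kappa)Z\bigr)^n\pmod{\mathfrak{BI}_{14n-1}}.
\end{gather*}
The heart of the argument is evaluating the right-hand side modulo $\mathfrak{BI}_{14n-1}$. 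Lemma \ref{lem:BIw1} gives $ZXY\equiv XYZ$, so $P:=-2XY+\kappa$ commutes with $Z$ modulo the next filtration level, and $(PZ)^n\equiv P^nZ^n$. The key identity is $P^2\equiv -4X^2Y^2+\kappa^2$: one computes $(XY)^2\equiv -X^2Y^2+\kappa XY$ from $YX\equiv -XY+\kappa$, and the $\pm 4\kappa XY$ cross terms in $P^2$ cancel exactly. For even $n$, binomial-expanding $(P^2)^{n/2}$ together with $Y^{2i}X^{2j}\equiv X^{2j}Y^{2i}$ (an iterate of $YX^2\equiv X^2Y$ from Lemma \ref{lem:BIw1}) yields (ii); for odd $n$, multiplying the even-case formula by $P$ and using $XY\cdot X^{2i}Y^{2i}\equiv X^{2i+1}Y^{2i+1}$ yields (iii).

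The main obstacle is not conceptual but bookkeeping: one must confirm that every PBW reordering introduces only terms of strictly smaller weight under $(4,4,6,8,9,9)$, so that every congruence holds modulo the advertised $\mathfrak{BI}_{14n-1}$. Lemma \ref{lem:BIw1} is tailored to this, reducing the residual work to a finite list of base cases handled by Proposition \ref{prop:reductionBI}.
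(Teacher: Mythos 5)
Your proposal is correct and follows essentially the same route as the paper: both compute $D^\zeta\equiv\frac{\kappa Z-2XYZ}{16}\pmod{\mathfrak{BI}_{13}}$ from Proposition \ref{prop:Dzeta} together with Lemmas \ref{lem:L} and \ref{lem:BIw1}, then obtain the even powers by squaring/binomially expanding this leading term and the odd powers by multiplying the even-power formula by the $n=1$ expression, all modulo the $(4,4,6,8,9,9)$-filtration. Your factoring out of $Z$ and working with $P=-2XY+\kappa$ is only a cosmetic reorganization of the paper's direct computation of $(D^\zeta)^2\bmod\mathfrak{BI}_{27}$.
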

\begin{proof}
(i) Immediate from Theorem \ref{thm:hom} and the construction of $\{\mathfrak{BI}_n\}_{n\in {\mathbb N}}$.

(ii) It follows from Proposition \ref{prop:Dzeta} that
\begin{gather*}
D^\zeta=\frac{L}{32} \pmod{\mathfrak{BI}_{13}}.
\end{gather*}
Evaluating $L \bmod{\mathfrak{BI}_{13}}$ by using Lemma \ref{lem:L}(ii) and Lemma \ref{lem:BIw1}(ii) yields that
\begin{gather}\label{e:Dmod13}
D^\zeta=\frac{Z\kappa}{16}-\frac{XYZ}{8} \pmod{\mathfrak{BI}_{13}}.
\end{gather}
Squaring the equation (\ref{e:Dmod13}) a direct calculation shows that
\begin{gather}\label{e:D2mod27}
\big(D^\zeta\big)^2=\frac{Z^2 \kappa^2}{256}-\frac{X^2Y^2Z^2}{64}\pmod{\mathfrak{BI}_{27}}.
\end{gather}
It follows from Lemma \ref{lem:BIw1}(i) that
\begin{gather}\label{e:mod55}
Z^2 \cdot X^2Y^2Z^2=X^2Y^2Z^2\cdot Z^2
\pmod{\mathfrak{BI}_{39}}.
\end{gather}
Now it is routine to derive (ii) by using (\ref{e:D2mod27}) and (\ref{e:mod55}).

(iii) To get (iii), one may multiply (\ref{e:Dmod13}) by the equation from (ii)
and simplify the resulting equation by using Lemma \ref{lem:BIw1}(i).
\end{proof}

\begin{Lemma}\label{lem:ell}
Let $i,j,k,\ell,n,r,s\in {\mathbb N}$ with $8i+8j+12 k+14\ell+18r+18s=n$.
Then the following $(i)$--$(iii)$ hold:
\begin{enumerate}\itemsep=0pt
\item[$(i)$] For all $i',j',k',r',s',t'\in {\mathbb N}$ with $4i'+4j'+6k'+8r'+9s'+9t'=n$ and $r'> \ell$, the coefficient of
\begin{gather*}
X^{i'} Y^{j'} Z^{k'} \kappa^{r'} \lambda^{s'} \mu^{t'}
\end{gather*}
in $\big(A^\zeta\big)^i\big(B^\zeta\big)^j\big(C^\zeta\big)^k\big(D^\zeta\big)^\ell\big(\alpha^\zeta\big)^r\big(\beta^\zeta\big)^s$
with respect to the ${\mathbb F}$-basis {\rm (\ref{e:basisBI})} for $\mathfrak{BI}$ is zero.

\item[$(ii)$] For all $i',j',k',r',s',t'\in {\mathbb N}$ with $4i'+4j'+6k'+8r'+9s'+9t'=n$ and $r'= \ell$, the coefficient of
\begin{gather*}
X^{i'} Y^{j'} Z^{k'} \kappa^{r'} \lambda^{s'} \mu^{t'}
\end{gather*}
in $\big(A^\zeta\big)^i\big(B^\zeta\big)^j\big(C^\zeta\big)^k\big(D^\zeta\big)^\ell\big(\alpha^\zeta\big)^r\big(\beta^\zeta\big)^s$
with respect to the ${\mathbb F}$-basis {\rm (\ref{e:basisBI})} for $\mathfrak{BI}$ is nonzero if and only if
\begin{gather*}
(i',j',k',s',t')=(2i, 2j, 2k+\ell, 2s, 2r).
\end{gather*}

\item[$(iii)$] The coefficient of
\begin{gather*}
X^{2i} Y^{2j} Z^{2k+\ell} \kappa^\ell \lambda^{2s} \mu^{2r}
\end{gather*}
in
$\big(A^\zeta\big)^i\big(B^\zeta\big)^j\big(C^\zeta\big)^k\big(D^\zeta\big)^\ell\big(\alpha^\zeta\big)^r\big(\beta^\zeta\big)^s$
with respect to the ${\mathbb F}$-basis {\rm (\ref{e:basisBI})} for $\mathfrak{BI}$ is
\begin{gather*}
(-1)^s 4^{-i-j-k-2\ell-3r-3s}.
\end{gather*}
\end{enumerate}
\end{Lemma}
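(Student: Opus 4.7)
The plan is to compute the product $(A^\zeta)^i(B^\zeta)^j(C^\zeta)^k(D^\zeta)^\ell(\alpha^\zeta)^r(\beta^\zeta)^s$ modulo $\mathfrak{BI}_{n-1}$, where $n=8i+8j+12k+14\ell+18r+18s$, and then read off the PBW coefficients. Since every PBW monomial of filtration weight exactly $n$ lies in $\mathfrak{BI}_n\setminus\mathfrak{BI}_{n-1}$, its coefficient in the product is determined by the image of the product in $\mathfrak{BI}_n/\mathfrak{BI}_{n-1}$.

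First I would invoke Lemma~\ref{lem:ABCDnzeta}(i)--(iii) together with the multiplicativity of the filtration $\{\mathfrak{BI}_n\}_{n\in{\mathbb N}}$ (condition (N3)) to replace each factor by its stated top-degree representative. This yields
\begin{gather*}
(A^\zeta)^i(B^\zeta)^j(C^\zeta)^k(D^\zeta)^\ell(\alpha^\zeta)^r(\beta^\zeta)^s \equiv \frac{(-1)^s}{2^{2i+2j+2k+6r+6s}} X^{2i} Y^{2j} Z^{2k}\cdot P_\ell\cdot \lambda^{2s}\mu^{2r}\pmod{\mathfrak{BI}_{n-1}},
\end{gather*}
where $P_\ell$ is the explicit sum furnished by Lemma~\ref{lem:ABCDnzeta}(ii) (for even $\ell$) or (iii) (for odd $\ell$). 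In both parities the unique summand of $P_\ell$ whose $\kappa$-exponent equals $\ell$ is $\kappa^\ell Z^\ell/16^\ell$ (the $p=0$, first sub-term in the odd case), while every other summand carries $\kappa$-exponent at most $\ell-1$.

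Next I would reduce the above expression into the PBW basis of Theorem~\ref{thm:basisBI}. Since $\kappa$, $\lambda$, $\mu$ are central they commute through any $X,Y,Z$-monomial. Lemma~\ref{lem:BIw1}(i) asserts that any two even powers drawn from $\{X,Y,Z\}$ commute modulo lower filtration, so iterating these swaps rewrites each typical summand $X^{2i}Y^{2j}Z^{2k}X^{2p}Y^{2p}Z^\ell$ as $X^{2i+2p}Y^{2j+2p}Z^{2k+\ell}$ plus terms of strictly lower filtration weight (certainly $<n$). The odd sub-terms of Lemma~\ref{lem:ABCDnzeta}(iii) reduce in the same manner, producing PBW monomials with both $i'$ and $j'$ odd, which are disjoint from the target set of part~(ii)--(iii).

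Reading off the PBW expansion then yields the three parts. Part~(i) is immediate because no PBW monomial of weight $n$ with $\kappa$-exponent exceeding $\ell$ appears anywhere. For part~(ii), the sole contribution with $\kappa$-exponent equal to $\ell$ comes from $\kappa^\ell Z^\ell/16^\ell$ and gives rise to the single PBW monomial $X^{2i}Y^{2j}Z^{2k+\ell}\kappa^\ell\lambda^{2s}\mu^{2r}$, so every other choice of $(i',j',k',s',t')$ with $r'=\ell$ has zero coefficient. Part~(iii) follows by collecting the scalar: $\frac{(-1)^s}{2^{2i+2j+2k+6r+6s}}\cdot\frac{1}{16^\ell}=(-1)^s\,4^{-i-j-k-2\ell-3r-3s}$. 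The main obstacle is the bookkeeping in the PBW reduction: one must check carefully that each application of Lemma~\ref{lem:BIw1}(i), and in the odd-$\ell$ case each interaction with the odd sub-terms of Lemma~\ref{lem:ABCDnzeta}(iii), only introduces errors of total filtration weight strictly less than $n$, which is exactly what the parity clauses of Lemma~\ref{lem:BIw1}(i) ensure, but the combinatorics is notationally heavy.
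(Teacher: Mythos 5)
Your proposal is correct and follows essentially the same route as the paper: the paper's (very terse) proof likewise uses Lemma~\ref{lem:ABCDnzeta} together with Lemma~\ref{lem:BIw1}(i) and the filtration property (N3) to express the product modulo $\mathfrak{BI}_{n-1}$ as a combination of weight-$n$ PBW monomials and then reads off the coefficients. Your write-up just makes explicit the bookkeeping (the unique $\kappa^\ell$-summand, the commutation of even powers, and the scalar $(-1)^s4^{-i-j-k-2\ell-3r-3s}$) that the paper leaves to the reader.
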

\begin{proof}Using Lemmas \ref{lem:BIw1}(i) and~\ref{lem:ABCDnzeta} one may express
\begin{gather*}
\big(A^\zeta\big)^i
\big(B^\zeta\big)^j
\big(C^\zeta\big)^k
\big(D^\zeta\big)^\ell
\big(\alpha^\zeta\big)^r
\big(\beta^\zeta\big)^s
+\mathfrak{BI}_{n-1}
\end{gather*}
as an ${\mathbb F}$-linear combination of $X^{i'} Y^{j'} Z^{k'} \kappa^{r'} \lambda^{s'} \mu^{t'}+\mathfrak{BI}_{n-1}$ for all $i',j',k',r',s',t'\in {\mathbb N}$ with $4i'+4j'+6k'+8r'+9s'+9t'=n$. The lemma follows from the expression.
\end{proof}

\begin{Theorem}\label{thm:injective}
The homomorphism $\zeta\colon \Re\to \mathfrak{BI}$ is injective.
\end{Theorem}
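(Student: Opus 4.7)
The plan is to combine the basis of Theorem~\ref{thm:basisURA3} with the $\mathbb{N}$-filtration $\{\mathfrak{BI}_n\}_{n\in\mathbb{N}}$ attached to the weight tuple $(4,4,6,8,9,9)$, and to run a leading-term argument using the explicit calculations in Lemma~\ref{lem:ell}. Suppose for contradiction that there is a nonzero $u\in\ker\zeta$. By Theorem~\ref{thm:basisURA3} we may expand
\[
u=\sum_{(i,j,k,\ell,r,s)\in S} c_{ijk\ell rs}\,A^i B^j C^k D^\ell \alpha^r \beta^s
\]
over a nonempty finite index set $S\subseteq\mathbb{N}^6$ with nonzero coefficients $c_{ijk\ell rs}\in\mathbb{F}$. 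Define the Racah weight
\[
w(i,j,k,\ell,r,s)=8i+8j+12k+14\ell+18r+18s,
\]
which matches the top filtration level of $(A^\zeta)^i,\,(B^\zeta)^j,\,(C^\zeta)^k,\,(D^\zeta)^\ell,\,(\alpha^\zeta)^r,\,(\beta^\zeta)^s$ established in Lemma~\ref{lem:ABCDnzeta}. Let $n$ be the maximum of $w$ on $S$ and let $T\subseteq S$ be the subset where this maximum is attained.

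I would then compute $\zeta(u)$ modulo $\mathfrak{BI}_{n-1}$. The tuples in $S\setminus T$ contribute zero since their images already lie in $\mathfrak{BI}_{n-1}$. For each $(i,j,k,\ell,r,s)\in T$, Lemma~\ref{lem:ell}(ii),(iii) gives
\[
\zeta\bigl(A^i B^j C^k D^\ell \alpha^r \beta^s\bigr)\equiv (-1)^s\,4^{-i-j-k-2\ell-3r-3s}\,X^{2i} Y^{2j} Z^{2k+\ell}\kappa^{\ell} \lambda^{2s} \mu^{2r}\pmod{\mathfrak{BI}_{n-1}},
\]
with a nonzero scalar on a single basis monomial of $\mathfrak{BI}$ from Theorem~\ref{thm:basisBI}.

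The crux is then the observation that the map
\[
(i,j,k,\ell,r,s)\longmapsto (2i,\,2j,\,2k+\ell,\,\ell,\,2s,\,2r)
\]
is injective: the $X$-, $Y$-, $\lambda$-, $\mu$-exponents in the target recover $i,j,s,r$, the $\kappa$-exponent recovers $\ell$, and then $k$ is recovered from the $Z$-exponent. Thus distinct tuples in $T$ produce distinct basis monomials of $\mathfrak{BI}$, each with a nonzero scalar. By the $\mathbb{F}$-linear independence of the basis \eqref{e:basisBI} we conclude $\zeta(u)\not\equiv 0\pmod{\mathfrak{BI}_{n-1}}$, contradicting $\zeta(u)=0$.

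The actual hard work has already been done in Lemmas~\ref{lem:BIw1}, \ref{lem:ABCDnzeta}, and \ref{lem:ell}; what remains is the short symbolic argument above. The one step that deserves emphasis — and is really the reason the particular weights $(4,4,6,8,9,9)$ were chosen — is the injectivity of the tuple map: the weights are large enough that the leading terms produced by the six basis generators do not collide, so the exponent bookkeeping is invertible.
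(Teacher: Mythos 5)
Your overall strategy is the paper's: expand a putative kernel element in the basis of Theorem~\ref{thm:basisURA3}, push it through the filtration attached to $(4,4,6,8,9,9)$, and compare leading terms via Lemma~\ref{lem:ell}. But there is a genuine gap at the crux. The displayed congruence asserting that $\zeta\bigl(A^iB^jC^kD^\ell\alpha^r\beta^s\bigr)$ is, modulo $\mathfrak{BI}_{n-1}$, a nonzero scalar times the \emph{single} basis monomial $X^{2i}Y^{2j}Z^{2k+\ell}\kappa^{\ell}\lambda^{2s}\mu^{2r}$ is false as soon as $\ell\geq 1$. Lemma~\ref{lem:ell}(ii),(iii) only identify the unique surviving monomial whose $\kappa$-exponent equals $\ell$ and its coefficient; they say nothing about monomials with smaller $\kappa$-exponent, and such monomials do occur: by Lemma~\ref{lem:ABCDnzeta}(ii),(iii) (already visible in~(\ref{e:Dmod13})) the leading part of $\bigl(D^\zeta\bigr)^\ell$ is a binomial-type sum of terms $X^{2a}Y^{2a}\kappa^{\ell-2a}Z^{\ell}$, all of the same weight $14\ell$. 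Consequently the image of a top-weight tuple $(i,j,k,\ell,r,s)$ spreads over several weight-$N$ basis monomials, and the extra ones, e.g.\ $X^{2(i+a)}Y^{2(j+a)}Z^{2k+\ell}\kappa^{\ell-2a}\lambda^{2s}\mu^{2r}$, are exactly the ``leading'' monomials you attach to \emph{other} tuples of the same maximal weight, such as $(i+a,\,j+a,\,k+a,\,\ell-2a,\,r,\,s)$ (note $8a+8a+12a-28a=0$, so the weight is unchanged). Hence the injectivity of your exponent map does not rule out cancellation among the top-weight contributions; the tell-tale sign is that your argument never invokes Lemma~\ref{lem:ell}(i), which is precisely the ingredient designed to control these cross terms.

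The gap is repairable by the paper's device: among the tuples of maximal weight $N$, choose one whose $D$-exponent $\ell$ is maximal, and compute only the coefficient of $X^{2i}Y^{2j}Z^{2k+\ell}\kappa^{\ell}\lambda^{2s}\mu^{2r}$, whose $\kappa$-exponent is $\ell$. Tuples of weight less than $N$ contribute nothing to a weight-$N$ basis monomial; tuples of weight $N$ with $D$-exponent $\ell'<\ell$ contribute nothing by Lemma~\ref{lem:ell}(i), since their images contain no monomial with $\kappa$-exponent exceeding $\ell'$; tuples with $\ell'=\ell$ contribute only if they coincide with the chosen tuple, by Lemma~\ref{lem:ell}(ii); and the resulting coefficient is the nonzero scalar of Lemma~\ref{lem:ell}(iii) times the corresponding coefficient of the kernel element, contradicting $\zeta(u)=0$. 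With this triangularity step inserted, your argument becomes the paper's proof.
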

\begin{proof}
Suppose on the contrary that there exists a nonzero element $I$ in the kernel of $\zeta$. For all $i,j,k,\ell,r,s\in {\mathbb N}$ let $c(i,j,k,\ell,r,s)$ denote the coefficient of
\begin{gather*}
A^iB^jC^kD^\ell\alpha^r\beta^s
\end{gather*}
in $I$ with respect to the ${\mathbb F}$-basis (\ref{e:basisURA3}) for $\Re$. Let $S$ denote the set of all $(i,j,k,\ell,r,s)\in {\mathbb N}^6$ with $c(i,j,k,\ell,r,s)\not=0$. For each $n\in {\mathbb N}$ we let $S(n)$ denote the set of all $(i,j,k,\ell,r,s)\in S$ with
$8i+8j+12 k+14\ell+18r+18s=n$. We may write
\begin{gather}\label{e:I}
I=\sum_{n\in {\mathbb N}} \sum_{(i,j,k,\ell,r,s)\in S(n)} c(i,j,k,\ell,r,s) A^i B^j C^k D^\ell\alpha^r\beta^s.
\end{gather}

Applying $\zeta$ to (\ref{e:I}) we have
\begin{gather}\label{e:Izeta}
0=\sum_{n\in {\mathbb N}}
\sum_{(i,j,k,\ell,r,s)\in S(n)}c(i,j,k,\ell,r,s)\big(A^\zeta\big)^i \big(B^\zeta\big)^j \big(C^\zeta\big)^k\big(D^\zeta\big)^\ell \big(\alpha^\zeta\big)^r \big(\beta^\zeta\big)^s.
\end{gather}
Since $I\not=0$ there exists at least one $n\in {\mathbb N}$ with $S(n)\not=\varnothing$.
Set
\begin{gather*}
N=\max\{n\,|\, S(n)\not=\varnothing\}.
\end{gather*}
Among the elements in $S(N)$ we choose a $6$-tuple $(i,j,k,\ell,r,s)$ that has the maximum value at~$\ell$. In what follows we evaluate the coefficient of
\begin{gather}\label{e:topnomial}
X^{2i} Y^{2j} Z^{2k+\ell} \kappa^{\ell} \lambda^{2s} \mu^{2r}
\end{gather}
in the right-hand side of (\ref{e:Izeta}) with respect to the ${\mathbb F}$-basis (\ref{e:basisBI}) for $\mathfrak{BI}$. Denote by $c$ the coefficient. Suppose that $(i',j',k',\ell',r',s')$ is a $6$-tuple in $S(n)$ for some $n\in {\mathbb N}$ such that
\begin{gather}\label{e:another}
\big(A^\zeta\big)^{i'} \big(B^\zeta\big)^{j'} \big(C^\zeta\big)^{k'} \big(D^\zeta\big)^{\ell'} \big(\alpha^\zeta\big)^{r'} \big(\beta^\zeta\big)^{s'}
\end{gather}
contributes to the coefficient~$c$. By Theorem \ref{thm:basisBI} the monomial (\ref{e:topnomial}) lies in $\mathfrak{BI}_N$ not in $\mathfrak{BI}_{N-1}$. By Lemma \ref{lem:ABCDnzeta} the term (\ref{e:another}) lies in $\mathfrak{BI}_n$. It follows from (N2) that $n\geq N$ and the maximality of $N$ implies $n=N$.
By Lemma~\ref{lem:ell}(i) we have $\ell'\geq \ell$ and the maximality of $\ell$ forces that $\ell'=\ell$. Combined with Lemma~\ref{lem:ell}(ii) this yields that $(i',j',k',r',s')=(i,j,k,r,s)$. Therefore
\begin{gather*}
 \big(A^\zeta\big)^i \big(B^\zeta\big)^j \big(C^\zeta\big)^k \big(D^\zeta\big)^\ell \big(\alpha^\zeta\big)^r \big(\beta^\zeta\big)^s
\end{gather*}
is the only summand in the right-hand side of (\ref{e:Izeta}) contributes to the coefficient~$c$. By Lem\-ma~\ref{lem:ell}(iii) the coefficient $c$ is the nonzero scalar \begin{gather*}
(-1)^s\cdot 4^{-i-j-k-2\ell-3r-3s}\cdot c(i,j,k,\ell,r,s).
\end{gather*}
It follows from Theorem \ref{thm:basisBI} that the right-hand side of (\ref{e:Izeta}) is nonzero, a contradiction. The theorem follows.
\end{proof}

As a consequence of Theorem \ref{thm:injective} the ${\mathbb F}$-algebra homomorphism $\natural\circ \zeta\colon \Re\to \mathfrak H$ described in Section~\ref{s:zeta} is injective. Note that \cite[Theorem~4.5]{DAHA2013} is a $q$-analogue of the injectivity for $\natural\circ \zeta$.

\section[The images of the Casimir elments of $\Re$ under $\zeta$]{The images of the Casimir elments of $\boldsymbol{\Re}$ under $\boldsymbol{\zeta}$}\label{s:RCasimir}

In light of Theorem \ref{thm:injective} the Racah algebra $\Re$ can be viewed as an ${\mathbb F}$-subalgebra of the Bannai--Ito algebra $\mathfrak{BI}$ via $\zeta$.

\begin{Lemma}\label{lem:iota}The element $\iota$ is in the centralizer of $\Re$ in $\mathfrak{BI}$.
\end{Lemma}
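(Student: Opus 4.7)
The plan is to use the generation result in Lemma~\ref{lem:alp&bet&gam&del}(i), which tells us $\Re$ is generated as an ${\mathbb F}$-algebra by $A$, $B$, $C$. Thus, after identification via $\zeta$, it suffices to verify that $\iota$ commutes with each of $\zeta(A)$, $\zeta(B)$, $\zeta(C)$. Because $\iota$ is fixed by the $D_6$-action on $\mathfrak{BI}$ (Proposition~\ref{prop:D6&BI}) and $\zeta$ intertwines the $D_6$-actions on $\Re$ and $\mathfrak{BI}$ (Corollary~\ref{cor:D6&zeta}), I would in fact only need to check one case, say $[\iota,\zeta(A)]=0$, and then apply $\tau$ and $\tau^2$ (which are antiautomorphisms sending $A\mapsto B$ and $A\mapsto C$) to deduce the other two.

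Since $\zeta(A)=\frac{(2X-3)(2X+1)}{16}=\frac{4X^2-4X-3}{16}$ is a polynomial in $X$, the required identity $[\iota,\zeta(A)]=0$ reduces to the single equality
\begin{gather*}
[\iota,X^2]=[\iota,X]
\end{gather*}
in $\mathfrak{BI}$. The key step is then a direct computation using Lemma~\ref{lem:L}(i): since $X$ commutes with itself,
\begin{gather*}
[\iota,X^2]=[Y,X^2]+[Z,X^2]=-[X^2,Y]-[X^2,Z]=-[X,Z]-[X,Y],
\end{gather*}
while
\begin{gather*}
[\iota,X]=[Y,X]+[Z,X]=-[X,Y]-[X,Z],
\end{gather*}
and the two are visibly equal.

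There is no real obstacle here: the only nontrivial input is Lemma~\ref{lem:L}(i), which already encodes the right ``squaring'' identities forced by the centrality of $\kappa,\lambda,\mu$. If one prefers to avoid appeal to $D_6$ symmetry, the analogous two-line computations for $[\iota,Y^2]=[\iota,Y]$ and $[\iota,Z^2]=[\iota,Z]$ go through verbatim using the remaining equations of Lemma~\ref{lem:L}(i), giving $[\iota,\zeta(B)]=[\iota,\zeta(C)]=0$ and completing the proof.
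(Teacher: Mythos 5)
Your proposal is correct and follows essentially the same route as the paper: reduce via Lemma~\ref{lem:alp&bet&gam&del}(i) to checking that $\iota$ commutes with $\zeta(A)$, $\zeta(B)$, $\zeta(C)$, and verify $[\iota,X^2]=[\iota,X]$ (and its analogues) using Lemma~\ref{lem:L}(i), exactly as the paper does for $[\iota,A]=\tfrac14\big(\big[Y+Z,X^2\big]-[Y+Z,X]\big)$. The optional $D_6$-symmetry shortcut is fine too (only note that $\tau^2$ is an automorphism, not an antiautomorphism, though this does not affect the conclusion).
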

\begin{proof}By Theorem \ref{thm:hom} and (\ref{e:iota}) the commutator $[\iota,A]$ is equal to $\frac{1}{4}$ times
\begin{gather}\label{e:[iota,A]}
\big[Y+Z,X^2\big]-[Y+Z,X].
\end{gather}
Simplifying (\ref{e:[iota,A]}) by using Lemma \ref{lem:L}(i) yields that (\ref{e:[iota,A]}) is zero. Therefore $\iota$ commutes with $A$. Similarly $\iota$ commutes with $B$ and $C$. Combined with Lemma \ref{lem:alp&bet&gam&del}(i) the lemma follows.
\end{proof}

By Lemma \ref{lem:iota} each of $\iota$, $\kappa$, $\lambda$, $\mu$ lies in the centralizer of $\Re$ in $\mathfrak{BI}$. The intention of the final section is to show that each Casimir element of $\Re$ can be uniquely expressed as a polynomial in~$\iota$, $\kappa$, $\lambda$, $\mu$ with coefficients in~${\mathbb F}$.

Throughout this section, let $\{\mathfrak{BI}_n\}_{n\in {\mathbb N}}$ denote the ${\mathbb F}$-subspaces of $\mathfrak{BI}$ associated with
\begin{gather}\label{112000}
(w_X,w_Y,w_Z,w_\kappa,w_\lambda,w_\mu)=(1,1,2,0,0,0).
\end{gather}
Since the sequence (\ref{112000}) satisfies (\ref{ineq1})--(\ref{ineq3}), it follows from Theorem \ref{thm:filtrationBI} that $\{\mathfrak{BI}_n\}_{n\in {\mathbb N}}$ is an ${\mathbb N}$-filtration of $\mathfrak{BI}$.

\begin{Lemma}\label{lem:iota=Z}
$Z^n=\iota^n \bmod{\mathfrak{BI}_{2n-1}}$ for all $n\in {\mathbb N}$.
\end{Lemma}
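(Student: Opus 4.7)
The plan is to proceed by induction on $n$, using the telescoping identity
\begin{gather*}
\iota^n - Z^n = (\iota - Z)\iota^{n-1} + Z\big(\iota^{n-1} - Z^{n-1}\big)
\end{gather*}
together with the multiplicative filtration property (N3). The base case $n = 0$ is immediate since both sides equal $1$, so $\iota^0 - Z^0 = 0 \in \mathfrak{BI}_{-1}$.

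For the inductive step, I would observe that by (\ref{e:iota}) we have $\iota - Z = X + Y$, and since $w_X = w_Y = 1$ with (\ref{112000}), both $X$ and $Y$ lie in $\mathfrak{BI}_1$, giving $\iota - Z \in \mathfrak{BI}_1$. Moreover $\iota \in \mathfrak{BI}_2$ (since $Z \in \mathfrak{BI}_2$ as well, using $w_Z = 2$), so by (N3) applied $(n-1)$ times we get $\iota^{n-1} \in \mathfrak{BI}_{2(n-1)}$. Hence the first summand $(\iota - Z)\iota^{n-1}$ lies in $\mathfrak{BI}_1 \cdot \mathfrak{BI}_{2n-2} \subseteq \mathfrak{BI}_{2n-1}$ by (N3).

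For the second summand, the induction hypothesis gives $\iota^{n-1} - Z^{n-1} \in \mathfrak{BI}_{2n-3}$, and since $Z \in \mathfrak{BI}_2$, another application of (N3) yields $Z(\iota^{n-1} - Z^{n-1}) \in \mathfrak{BI}_{2n-1}$. Combining the two contributions gives $\iota^n - Z^n \in \mathfrak{BI}_{2n-1}$, completing the induction.

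There is no real obstacle here: the statement is purely a degree-count in the filtration associated to the weight vector $(1,1,2,0,0,0)$, and the argument only requires that $\iota$ and $Z$ have the same weight $2$ while their difference $X + Y$ has strictly smaller weight. The telescoping identity is the natural device that turns this observation into the desired congruence, and everything else follows from Theorem~\ref{thm:filtrationBI} guaranteeing that $\{\mathfrak{BI}_n\}_{n \in \mathbb{N}}$ really is an $\mathbb{N}$-filtration for the chosen weights.
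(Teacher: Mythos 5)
Your proposal is correct and is essentially the same argument as the paper's: induction on $n$ using the telescoping decomposition $\iota^n-Z^n=Z\big(\iota^{n-1}-Z^{n-1}\big)+(\iota-Z)\iota^{n-1}$, the fact that $\iota-Z=X+Y\in\mathfrak{BI}_1$ while $\iota,Z\in\mathfrak{BI}_2$, and property (N3) of the filtration. No gaps; your handling of the base case via $\mathfrak{BI}_{-1}=0$ is consistent with the paper's conventions.
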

\begin{proof}
Proceed by induction on $n$. It is trivial for $n=0$. By (\ref{e:iota}) we have
\begin{gather}\label{e:n=1}
\iota-Z=X+Y\in \mathfrak{BI}_1.
\end{gather}
Hence the lemma holds for $n=1$.
 Suppose that $n\geq 2$. We divide $\iota^n-Z^n$ into
\begin{gather}\label{iotan-Zn}
Z\big(\iota^{n-1}-Z^{n-1}\big)+(\iota-Z)\iota^{n-1}.
\end{gather}
Since $Z\in \mathfrak{BI}_2$ and by induction hypothesis, the first summand of (\ref{iotan-Zn}) is in $\mathfrak{BI}_{2n-1}$. By (\ref{e:iota}) the element $\iota\in \mathfrak{BI}_2$ and hence $\iota^{n-1}\in \mathfrak{BI}_{2n-2}$. Combined with~(\ref{e:n=1}) the second summand of~(\ref{iotan-Zn}) is in~$\mathfrak{BI}_{2n-1}$. The lemma follows.
\end{proof}

\begin{Lemma}\label{lem:basis1_cosetBI}\quad
\begin{enumerate}\itemsep=0pt
\item[$(i)$] For all $n\in {\mathbb N}$ the elements
\begin{gather*}
X^i Y^j Z^k \kappa^r \lambda^s \mu^t+\mathfrak{BI}_{n-1}
\qquad
\text{for all $i,j,k,r,s,t\in {\mathbb N}$ with $i+j+2k=n$}
\end{gather*}
are an ${\mathbb F}$-basis for $\mathfrak{BI}_n/\mathfrak{BI}_{n-1}$.

\item[$(ii)$] For all $n\in {\mathbb N}$ the elements
\begin{gather*}
X^i Y^j \iota^k \kappa^r \lambda^s \mu^t+\mathfrak{BI}_{n-1}
\qquad
\text{for all $i,j,k,r,s,t\in {\mathbb N}$ with $i+j+2k=n$}
\end{gather*}
are an ${\mathbb F}$-basis for $\mathfrak{BI}_n/\mathfrak{BI}_{n-1}$.

\item[$(iii)$] For all $n\in {\mathbb N}$ the elements
\begin{gather*}
X^i Y^j \iota^k \kappa^r \lambda^s \mu^t
\qquad
\text{for all $i,j,k,r,s,t\in {\mathbb N}$ with $i+j+2k\leq n$}
\end{gather*}
are an ${\mathbb F}$-basis for $\mathfrak{BI}_n$.
\end{enumerate}
\end{Lemma}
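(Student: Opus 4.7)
The plan is to prove the three parts in order, with each part building on the preceding one through the filtration structure. The main input is Theorem~\ref{thm:basisBI} (the PBW-type basis for $\mathfrak{BI}$) together with Lemma~\ref{lem:iota=Z} ($Z^n \equiv \iota^n \pmod{\mathfrak{BI}_{2n-1}}$).

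For part (i) I would argue directly from the construction of $\mathfrak{BI}_n$ and Theorem~\ref{thm:basisBI}. By definition $\mathfrak{BI}_n$ is the $\mathbb{F}$-span of the PBW monomials with $i+j+2k \le n$, and $\mathfrak{BI}_{n-1}$ is the $\mathbb{F}$-span of those with $i+j+2k \le n-1$. Because Theorem~\ref{thm:basisBI} asserts these PBW monomials are $\mathbb{F}$-linearly independent in $\mathfrak{BI}$, we get the internal direct sum decomposition of $\mathfrak{BI}_n$ into $\mathfrak{BI}_{n-1}$ and the span of the $i+j+2k=n$ monomials; hence the cosets of the latter are a basis of the quotient.

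For part (ii), the key step is to replace $Z^k$ by $\iota^k$ modulo lower filtration degree. Applying Lemma~\ref{lem:iota=Z}, we have $Z^k - \iota^k \in \mathfrak{BI}_{2k-1}$. Multiplying on the left by $X^i Y^j \in \mathfrak{BI}_{i+j}$ and on the right by $\kappa^r \lambda^s \mu^t \in \mathfrak{BI}_0$ and invoking axiom (N3), we obtain
\begin{gather*}
X^i Y^j Z^k \kappa^r \lambda^s \mu^t \equiv X^i Y^j \iota^k \kappa^r \lambda^s \mu^t \pmod{\mathfrak{BI}_{n-1}}
\end{gather*}
whenever $i+j+2k=n$. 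So the claimed family in (ii) represents the same cosets as the basis from (i); being a change of labels of a basis, it is itself a basis of $\mathfrak{BI}_n/\mathfrak{BI}_{n-1}$.

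For part (iii) I would proceed by induction on $n$. The base case $n=0$ is immediate since $\mathfrak{BI}_0$ is already the span of $\kappa^r \lambda^s \mu^t$ by Theorem~\ref{thm:basisBI} (all nonzero weights force $i=j=k=0$), and these are linearly independent. For the inductive step, spanning follows because each PBW generator $X^i Y^j Z^k \kappa^r \lambda^s \mu^t$ with $i+j+2k \le n$ equals $X^i Y^j \iota^k \kappa^r \lambda^s \mu^t$ plus an element of $\mathfrak{BI}_{i+j+2k-1} \subseteq \mathfrak{BI}_{n-1}$ (again by Lemma~\ref{lem:iota=Z} and (N3)), and the correction term is handled by the induction hypothesis. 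For linear independence, given a vanishing $\mathbb{F}$-linear combination with $i+j+2k \le n$, split it into the sum over $i+j+2k=n$ and the sum over $i+j+2k \le n-1$; the latter lies in $\mathfrak{BI}_{n-1}$, so the former maps to $0$ in $\mathfrak{BI}_n/\mathfrak{BI}_{n-1}$, whence by part (ii) all coefficients with $i+j+2k=n$ vanish, and the inductive hypothesis kills the remaining coefficients.

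The main obstacle, if any, is keeping the filtration bookkeeping clean in part (ii)—specifically checking that the replacement $Z^k \rightsquigarrow \iota^k$ only costs $\mathfrak{BI}_{n-1}$ error after multiplication by $X^iY^j$ and $\kappa^r\lambda^s\mu^t$. This is routine once axiom (N3) is invoked, and the rest of the argument reduces to standard filtration and graded-basis bookkeeping.
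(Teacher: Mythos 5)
Your proposal is correct and follows essentially the same route as the paper: part (i) directly from the PBW basis (Theorem~\ref{thm:basisBI}) and the definition of the filtration, part (ii) by replacing $Z^k$ with $\iota^k$ modulo $\mathfrak{BI}_{n-1}$ via Lemma~\ref{lem:iota=Z} and (N3), and part (iii) by induction on $n$ using (ii). The paper leaves these steps as "immediate"/"routine"; your write-up simply supplies the bookkeeping, and it is accurate.
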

\begin{proof}
(i) Immediate from Theorem \ref{thm:basisBI} and the construction of $\{\mathfrak{BI}_n\}_{n\in {\mathbb N}}$.

(ii) Immediate from Lemma \ref{lem:iota=Z} and (i).

(iii) Using (ii) the statement (iii) follows by a routine induction on $n$.
\end{proof}

\begin{Theorem}\label{thm:basisBI2}
The elements
\begin{gather}\label{e:basisBI2}
X^i Y^j \iota^k \kappa^r \lambda^s \mu^t
\qquad
\text{for all $i,j,k,r,s,t\in {\mathbb N}$}
\end{gather}
are an ${\mathbb F}$-basis for $\mathfrak{BI}$.
\end{Theorem}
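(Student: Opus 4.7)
The plan is to deduce this theorem directly from Lemma \ref{lem:basis1_cosetBI}(iii) together with the fact that $\{\mathfrak{BI}_n\}_{n\in {\mathbb N}}$ is an ${\mathbb N}$-filtration of $\mathfrak{BI}$, which was established earlier in the section using Theorem \ref{thm:filtrationBI} applied to the weight data (\ref{112000}). In particular, property (N1) gives $\mathfrak{BI}=\bigcup_{n\in {\mathbb N}} \mathfrak{BI}_n$, and Lemma \ref{lem:basis1_cosetBI}(iii) provides an explicit basis for each $\mathfrak{BI}_n$ built from the monomials $X^i Y^j \iota^k \kappa^r \lambda^s \mu^t$ subject to a degree bound. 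So the two statements should just glue together.

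For the spanning property, I would argue as follows. Let $u\in \mathfrak{BI}$. By (N1) there exists $n\in {\mathbb N}$ such that $u\in \mathfrak{BI}_n$. By Lemma \ref{lem:basis1_cosetBI}(iii), $u$ is an ${\mathbb F}$-linear combination of the monomials $X^i Y^j \iota^k \kappa^r \lambda^s \mu^t$ with $i+j+2k\leq n$. Hence the elements in (\ref{e:basisBI2}) span $\mathfrak{BI}$.

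For linear independence, consider a finite ${\mathbb F}$-linear relation among the elements in (\ref{e:basisBI2}); choose $n\in {\mathbb N}$ large enough so that every monomial $X^i Y^j \iota^k \kappa^r \lambda^s \mu^t$ appearing in the relation satisfies $i+j+2k\leq n$. The relation then lives inside $\mathfrak{BI}_n$, and Lemma \ref{lem:basis1_cosetBI}(iii) forces every coefficient to be zero.

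I do not anticipate any genuine obstacle here: all the nontrivial work has already been done to prove Theorem \ref{thm:basisBI}, Theorem \ref{thm:filtrationBI}, Lemma \ref{lem:iota=Z}, and Lemma \ref{lem:basis1_cosetBI}. The only point requiring a little care is the reindexing needed to conclude linear independence from a statement about each $\mathfrak{BI}_n$, but this is immediate once one notes that any finite linear combination involves only finitely many of the generating monomials and hence sits in some $\mathfrak{BI}_n$. The argument is therefore a short wrap-up rather than a new computation.
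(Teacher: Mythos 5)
Your proposal is correct and follows exactly the paper's route: the paper also deduces the theorem immediately from (N1) and Lemma \ref{lem:basis1_cosetBI}(iii), with your write-up merely spelling out the spanning and linear-independence details that the paper leaves implicit.
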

\begin{proof}
Immediate from (N1) and Lemma \ref{lem:basis1_cosetBI}(iii).
\end{proof}

\begin{Corollary}\label{cor:ind_iklm}
The elements $\iota$, $\kappa$, $\lambda$, $\mu$ of $\mathfrak{BI}$ are algebraically independent over ${\mathbb F}$.
\end{Corollary}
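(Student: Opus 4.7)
The corollary should be essentially immediate from the preceding Theorem~\ref{thm:basisBI2}. The plan is to observe that the monomials in $\iota,\kappa,\lambda,\mu$ alone form a subfamily of the $\mathbb{F}$-basis \eqref{e:basisBI2}, namely the subfamily obtained by setting $i=j=0$. Hence any polynomial relation among $\iota,\kappa,\lambda,\mu$ becomes a linear dependence among basis elements of $\mathfrak{BI}$.

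More concretely, suppose $P(x_1,x_2,x_3,x_4)$ is a polynomial over $\mathbb{F}$ with $P(\iota,\kappa,\lambda,\mu)=0$ in $\mathfrak{BI}$. Expand $P$ in the monomial basis of $\mathbb{F}[x_1,x_2,x_3,x_4]$: there exist scalars $c_{k,r,s,t}\in\mathbb{F}$, almost all zero, with
\begin{gather*}
P(x_1,x_2,x_3,x_4)=\sum_{k,r,s,t\in\mathbb{N}} c_{k,r,s,t}\, x_1^k x_2^r x_3^s x_4^t.
\end{gather*}
Substituting yields $\sum_{k,r,s,t} c_{k,r,s,t}\,\iota^k \kappa^r \lambda^s \mu^t=0$ in $\mathfrak{BI}$. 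This is a finite $\mathbb{F}$-linear combination of the elements $X^i Y^j \iota^k \kappa^r \lambda^s \mu^t$ of Theorem~\ref{thm:basisBI2} in the special case $i=j=0$. Since those elements form an $\mathbb{F}$-basis for $\mathfrak{BI}$ they are $\mathbb{F}$-linearly independent, so every $c_{k,r,s,t}$ equals zero and $P$ is the zero polynomial. This establishes algebraic independence.

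There is no genuine obstacle: all the work was done in assembling the $\mathbb{N}$-filtration of $\mathfrak{BI}$ associated with the weights $(1,1,2,0,0,0)$ and in Lemmas~\ref{lem:iota=Z} and \ref{lem:basis1_cosetBI} which gave Theorem~\ref{thm:basisBI2}. The only thing to be slightly careful about is to cite the basis theorem correctly and to note that the subfamily indexed by $i=j=0$ of a basis is itself linearly independent, which is immediate.
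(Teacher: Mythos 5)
Your proposal is correct and follows exactly the paper's route: the paper proves the corollary as ``immediate from Theorem~\ref{thm:basisBI2},'' and your argument simply spells out that immediacy by noting that the monomials $\iota^k\kappa^r\lambda^s\mu^t$ are the $i=j=0$ subfamily of the basis (\ref{e:basisBI2}), hence linearly independent.
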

\begin{proof}Immediate from Theorem \ref{thm:basisBI2}.
\end{proof}

\begin{Lemma}\label{lem:presentationBI2}
The ${\mathbb F}$-algebra $\mathfrak{BI}$ has a presentation with generators $X$, $Y$, $\iota$, $\kappa$, $\lambda$, $\mu$ and relations
\begin{gather*}
YX=-XY-X-Y+\iota+\kappa,\\
\iota Y =2 Y^2 -Y\iota- Y+\iota+\kappa+\lambda,\\
\iota X =2 X^2-X\iota- X+\iota+\kappa+\mu,\\
\kappa X=X\kappa,\qquad\kappa Y=Y\kappa,\qquad\kappa \iota=\iota \kappa,\\
\lambda X=X\lambda,\qquad\lambda Y=Y\lambda,\qquad\lambda \iota=\iota\lambda,\qquad\lambda\kappa=\kappa\lambda,\\
\mu X=X\mu,\qquad\mu Y=Y\mu,\qquad\mu \iota=\iota\mu,\qquad\mu\kappa=\kappa\mu,\qquad\mu\lambda=\lambda\mu.
\end{gather*}
\end{Lemma}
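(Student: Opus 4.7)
The plan is to present $\mathfrak{BI}$ by the new generators by exhibiting mutually inverse homomorphisms between the algebra $\mathfrak{BI}'$ defined by the new presentation and the algebra $\mathfrak{BI}$ of Definition~\ref{defn:BI}. This leverages the old presentation in Proposition~\ref{prop:reductionBI} together with the substitution $Z = \iota - X - Y$ dictated by~(\ref{e:iota}).

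First I would verify that all the new relations are consequences of the old ones when $\iota$ is interpreted as $X+Y+Z$. The relation $YX = -XY - X - Y + \iota + \kappa$ is a direct rewriting of $YX = -XY + Z + \kappa$ from Proposition~\ref{prop:reductionBI}. For the relation $\iota Y = 2Y^2 - Y\iota - Y + \iota + \kappa + \lambda$, I would expand
\begin{gather*}
\iota Y + Y \iota = (XY + YX) + 2Y^2 + (ZY + YZ) = (Z + \kappa) + 2Y^2 + (X + \lambda)
\end{gather*}
using $\{X,Y\} = Z + \kappa$ and $\{Y,Z\} = X + \lambda$, and then substitute $Z = \iota - X - Y$; the $\iota X$ relation is obtained analogously via $\{Z, X\} = Y + \mu$. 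Centrality of $\kappa$, $\lambda$, $\mu$ with respect to $\iota = X+Y+Z$ and their pairwise commutativity are immediate from Proposition~\ref{prop:reductionBI}. This shows that the assignment $X \mapsto X$, $Y \mapsto Y$, $\iota \mapsto \iota$, $\kappa \mapsto \kappa$, $\lambda \mapsto \lambda$, $\mu \mapsto \mu$ extends to an $\mathbb{F}$-algebra homomorphism $\phi\colon \mathfrak{BI}' \to \mathfrak{BI}$, which is surjective because $Z = \iota - X - Y$ in $\mathfrak{BI}$.

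Next I would construct an inverse $\psi\colon \mathfrak{BI} \to \mathfrak{BI}'$ by declaring $X \mapsto X$, $Y \mapsto Y$, $Z \mapsto \iota - X - Y$, and $\kappa$, $\lambda$, $\mu$ to themselves. To see this is well-defined, one checks that the relations of Proposition~\ref{prop:reductionBI} hold after this substitution in $\mathfrak{BI}'$: the $YX$ relation becomes the first relation of the new presentation; the $ZY$ relation becomes $(\iota - X - Y)Y = -Y(\iota - X - Y) + X + \lambda$, which simplifies, using the first new relation to replace $XY + YX$, to exactly the $\iota Y$ relation; the $ZX$ relation is handled symmetrically. The commutations involving $Z = \iota - X - Y$ follow because $\kappa$, $\lambda$, $\mu$ commute with each of $X$, $Y$, $\iota$ in $\mathfrak{BI}'$.

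Finally, both compositions $\phi \circ \psi$ and $\psi \circ \phi$ fix the generators on the nose ($\psi \circ \phi$ sends $\iota \mapsto (\iota - X - Y) + X + Y = \iota$, and $\phi \circ \psi$ sends $Z \mapsto \iota - X - Y = Z$ in $\mathfrak{BI}$), so $\phi$ is an isomorphism and the new presentation is established. The only place requiring care is the reduction of the $\iota X$ and $\iota Y$ relations to their anticommutator form, but once one observes that $\iota u + u \iota = \{X, u\} + \{Y, u\} + \{Z, u\}$ for $u \in \{X, Y\}$ the verification is routine. I would note that one could alternatively invoke Theorem~\ref{thm:basisBI2} and the diamond lemma, arguing that the new reduction system has the monomials $(\ref{e:basisBI2})$ as irreducible elements and all overlap ambiguities resolve, but the two-homomorphism route above is shorter given what has already been proved.
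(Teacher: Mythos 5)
Your proof is correct and follows the same route as the paper, which simply observes that the new presentation is a reformulation of Proposition~\ref{prop:reductionBI} under the change of generators $Z=\iota-X-Y$ coming from~(\ref{e:iota}); you have just made the routine verification explicit via the two mutually inverse homomorphisms. No issues to report.
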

\begin{proof}
This is a reformulation of Proposition \ref{prop:reductionBI} by using (\ref{e:iota}).
\end{proof}

Recall the $D_6$-symmetric Casimir elements $\Omega_A$, $\Omega_B$, $\Omega_C$ of $\Re$ from (\ref{eq:CasA})--(\ref{eq:CasC}).

\begin{Proposition}\label{prop:D6Casmir}
The $D_6$-symmetric Casimir elements $\Omega_A$, $\Omega_B$, $\Omega_C$ of $\Re$ have the following expressions:
\begin{gather}
\Omega_A=\frac{\lambda(\lambda-2\iota+3)\big(4\iota^2-8\iota-4\kappa-4\lambda-4\mu+7\big)}{1024}-\Gamma,\label{OmegaA}\\
\Omega_B=\frac{\mu(\mu-2\iota+3)\big(4\iota^2-8\iota-4\kappa-4\lambda-4\mu+7\big)}{1024}-\Gamma,\label{OmegaB}\\
\Omega_C=\frac{\kappa(\kappa-2\iota+3)\big(4\iota^2-8\iota-4\kappa-4\lambda-4\mu+7\big)}{1024}-\Gamma,\label{OmegaC}
\end{gather}
where
\begin{gather*}
\Gamma=\frac{3(2\iota+3)(2\iota+1)(2\iota-5)(2\iota-7)}{4096}-\frac{(2\iota+1)(6\iota-13)(\kappa+\lambda+\mu)}{512}\\
\hphantom{\Gamma=}{} + \frac{(\kappa+\lambda+\mu)(\kappa+\lambda+\mu+4)}{64}
-\frac{(2\iota-3)(\kappa\lambda+\lambda\mu+\mu\kappa)}{512}
+\frac{\kappa\lambda\mu}{256}.
\end{gather*}
\end{Proposition}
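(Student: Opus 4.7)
By Theorem \ref{thm:injective} the homomorphism $\zeta$ is injective, so the three asserted identities may be verified inside $\mathfrak{BI}$ after applying $\zeta$ to both sides and using Theorem \ref{thm:hom} to identify the images of $\alpha,\beta,\gamma,\delta$ as polynomials in $\iota,\kappa,\lambda,\mu$. My first step is to reduce the three identities to one via the $D_6$-symmetry. Corollary \ref{cor:D6&zeta} says $\zeta$ is $D_6$-equivariant; Lemma \ref{lem:D6&Cas} says $\tau$ cycles $\Omega_A\to\Omega_B\to\Omega_C\to\Omega_A$; Proposition \ref{prop:D6&BI}(ii) says $\tau$ fixes $\iota$ and cycles $\kappa\to\lambda\to\mu\to\kappa$. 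The auxiliary element $\Gamma$ is a polynomial in $\iota$ and in the elementary symmetric functions of $\kappa,\lambda,\mu$, hence $\tau$-invariant. The leading summands $\lambda(\lambda-2\iota+3)(4\iota^2-8\iota-4\kappa-4\lambda-4\mu+7)/1024$, and its two cousins, are $\tau$-cycled in the same pattern as the $\Omega$'s (the inner factor is symmetric, only the prefix $\lambda$ versus $\mu$ versus $\kappa$ changes). Therefore it is enough to establish \eqref{OmegaA}; identities \eqref{OmegaB} and \eqref{OmegaC} follow by applying $\tau$ and $\tau^2$.

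Next, I would prove \eqref{OmegaA} by direct substitution. Expand
\begin{gather*}
\Omega_A=D^2+\tfrac{1}{2}(BAC+CAB)+A^2+B\gamma-C\beta-A\delta
\end{gather*}
and compute each summand in $\mathfrak{BI}$ using Theorem \ref{thm:hom} for the images of $A,B,C,\beta,\gamma,\delta$ and Proposition \ref{prop:Dzeta} for the image of $D$. The four ``small'' terms $A^2$, $B\gamma$, $C\beta$, $A\delta$ are straightforward: the factors $\beta^\zeta,\gamma^\zeta,\delta^\zeta$ already lie in $\mathbb F[\iota,\kappa,\lambda,\mu]$ and commute with everything, so each expands as a quadratic in $X$ (respectively $Y,Z$) multiplied by a known polynomial in $\iota,\kappa,\lambda,\mu$. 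The terms $BAC$ and $CAB$ are polynomials of degree $\leq 6$ in $X,Y,Z$; I would reduce them to normal form using the presentation of Proposition \ref{prop:reductionBI}, exploiting the symmetry $B(AC)+{}(CA)B$ to collapse many anticommutator pieces into central quantities via the identities $\{X,Y\}=Z+\kappa$, $\{Y,Z\}=X+\lambda$, $\{Z,X\}=Y+\mu$.

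The main obstacle is squaring $\zeta(D)$. From Proposition \ref{prop:Dzeta} we have $32\,\zeta(D)=[X,Y]+[Y,Z]+[Z,X]+L$, so $1024\,\zeta(D)^2$ is a sum of sixteen products in which $L$ itself is the most unwieldy ingredient. Lemma \ref{lem:L} is the tool for taming it: part (ii) supplies six equivalent expressions for $L$ (as $\{X,[Z,Y]\}$ and its cyclic/commutator variants) that allow each cross-product to be brought to a manageable shape, while part (i) converts single commutators of the form $[X^2,Y]$ into ordinary commutators $[X,Z]$. After repeatedly applying these and then substituting $X+Y+Z=\iota$ in the remaining symmetric combinations, one arrives at an element of $\mathbb F[\iota,\kappa,\lambda,\mu]$; collecting contributions from all the summands of $\Omega_A$ and matching coefficients against the right-hand side of \eqref{OmegaA}---a book-keeping exercise organized by total degree in the filtration \eqref{112000}---completes the verification.
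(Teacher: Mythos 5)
Your proposal is correct and takes essentially the same route as the paper: the paper also verifies \eqref{OmegaA} by a direct computation in $\mathfrak{BI}$ (applying Theorem~\ref{thm:hom} and Proposition~\ref{prop:Dzeta} to \eqref{eq:CasA}, eliminating $Z$ via $Z=\iota-X-Y$, and normalizing with Lemma~\ref{lem:presentationBI2} against the basis \eqref{e:basisBI2}), then deduces \eqref{OmegaB} and \eqref{OmegaC} from the $D_6$-symmetry exactly as you do via Lemma~\ref{lem:D6&Cas} and Proposition~\ref{prop:D6&BI}. The only cosmetic differences are that you reduce in the $X,Y,Z$ normal form and substitute $\iota=X+Y+Z$ at the end rather than eliminating $Z$ first, and that your appeal to Theorem~\ref{thm:injective} is superfluous since the identities are verified directly in $\mathfrak{BI}$.
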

\begin{proof}Applying Theorem \ref{thm:hom} and Proposition \ref{prop:Dzeta} to (\ref{eq:CasA}) and replacing $Z$ by $\iota-X-Y$, we may express $\Omega_A$ in terms of $X$, $Y$, $\iota$, $\kappa$, $\lambda$, $\mu$. To get~(\ref{OmegaA}) we apply Lemma \ref{lem:presentationBI2} to express the resulting expression as an ${\mathbb F}$-linear combination of~(\ref{e:basisBI2}). Combined with Lemma \ref{lem:D6&Cas} and Proposition~\ref{prop:D6&BI} we obtain~(\ref{OmegaB}) and~(\ref{OmegaC}).
\end{proof}

\begin{Theorem}\label{thm:Casimir}For each Casimir element $\Omega$ of $\Re$ there exists a unique four-variable polynomial $P(x_1,x_2,x_3,x_4)$ over ${\mathbb F}$ such that
\begin{gather*}
\Omega=P(\iota,\kappa,\lambda,\mu).
\end{gather*}
\end{Theorem}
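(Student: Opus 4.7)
The plan is to exploit the fact that Proposition~\ref{prop:D6Casmir} already writes the specific Casimir element $\Omega_A$ as a polynomial in $\iota,\kappa,\lambda,\mu$, and to combine this with the description of an arbitrary Casimir element modulo the subalgebra $\mathfrak{C}$. The uniqueness will then fall out of the algebraic independence assertion Corollary~\ref{cor:ind_iklm}.

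For \emph{existence}, first observe that by Lemma~\ref{lem:Cas&central} the element $\Omega_A$ is a Casimir element, so by Definition~\ref{defn:Casimir} every Casimir element $\Omega$ of $\Re$ satisfies $\Omega-\Omega_A\in \mathfrak{C}$. Since $\mathfrak{C}$ is by construction the $\mathbb{F}$-subalgebra of $\Re$ generated by the (commuting) central elements $\alpha,\beta,\gamma,\delta$, there is a four-variable polynomial $f$ over $\mathbb{F}$ with $\Omega-\Omega_A = f(\alpha,\beta,\gamma,\delta)$. Working inside $\mathfrak{BI}$ via the inclusion $\zeta$, Theorem~\ref{thm:hom} writes each of $\alpha,\beta,\gamma,\delta$ explicitly as a polynomial in $\iota,\kappa,\lambda,\mu$, so $f(\alpha,\beta,\gamma,\delta)$ is a polynomial in $\iota,\kappa,\lambda,\mu$. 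Proposition~\ref{prop:D6Casmir} writes $\Omega_A$ as a polynomial in $\iota,\kappa,\lambda,\mu$ as well. Adding the two expressions produces a four-variable polynomial $P$ over $\mathbb{F}$ with $\Omega = P(\iota,\kappa,\lambda,\mu)$.

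For \emph{uniqueness}, suppose $P_1$ and $P_2$ are two four-variable polynomials over $\mathbb{F}$ with $P_1(\iota,\kappa,\lambda,\mu)=P_2(\iota,\kappa,\lambda,\mu)=\Omega$. Then $(P_1-P_2)(\iota,\kappa,\lambda,\mu)=0$ in $\mathfrak{BI}$. By Corollary~\ref{cor:ind_iklm} the elements $\iota,\kappa,\lambda,\mu$ are algebraically independent over $\mathbb{F}$, so $P_1-P_2$ vanishes as a polynomial, giving $P_1=P_2$.

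The main conceptual obstacle was already absorbed into the preceding sections: one needed $\zeta$ to be injective (Theorem~\ref{thm:injective}) so that $\Re$ may be identified with $\zeta(\Re)\subseteq\mathfrak{BI}$, and one needed the PBW-type basis of Theorem~\ref{thm:basisBI2}, obtained from the $\mathbb{N}$-filtration associated with $(1,1,2,0,0,0)$, to know that $\iota,\kappa,\lambda,\mu$ are algebraically independent. With those tools in hand, the theorem itself reduces to the two-line existence-plus-uniqueness argument above, and no further filtration or diamond-lemma manipulation should be required.
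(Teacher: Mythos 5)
Your proposal is correct and follows essentially the same route as the paper: write $\Omega=\Omega_A+Q(\alpha,\beta,\gamma,\delta)$ via Definition~\ref{defn:Casimir}, substitute the images of $\alpha,\beta,\gamma,\delta$ from Theorem~\ref{thm:hom}, invoke Proposition~\ref{prop:D6Casmir} for $\Omega_A$, and get uniqueness from Corollary~\ref{cor:ind_iklm}. The only cosmetic difference is that you cite Lemma~\ref{lem:Cas&central} explicitly to place $\Omega_A$ in the Casimir class, which the paper leaves implicit.
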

\begin{proof}
By Definition \ref{defn:Casimir} there exists a four-variable polynomial $Q(y_1,y_2,y_3,y_4)$ over ${\mathbb F}$ such that
$\Omega=\Omega_A+Q(\alpha,\beta,\gamma,\delta)$.
Set
$\widehat Q(x_1,x_2,x_3,x_4) = Q(y_1,y_2,y_3,y_4)$
by substituting
\begin{gather*}
y_1 =\frac{(2 x_1- x_2 -x_4-3)(x_2-x_4)}{64},\qquad y_2 =\frac{(2 x_1-x_3-x_2-3)(x_3-x_2)}{64},\\
y_3 =\frac{(2 x_1-x_4-x_3-3)(x_4-x_3)}{64},\qquad y_4 =\frac{x_1^2-2x_1-x_2-x_3-x_4}{4}-\frac{9}{16}.
\end{gather*}
It follows from Theorem \ref{thm:hom} that $\Omega=\Omega_A+\widehat Q(\iota,\kappa,\lambda,\mu)$. Combined with Proposition~\ref{prop:D6Casmir} the existence follows.
The uniqueness is immediate from Corollary~\ref{cor:ind_iklm}.
\end{proof}

\subsection*{Acknowledgements}

The research is supported by the Ministry of Science and Technology of Taiwan under the project MOST 106-2628-M-008-001-MY4.

\pdfbookmark[1]{References}{ref}
\LastPageEnding

\end{document}